\newtheorem{lemma}{Lemma}[section]
\newtheorem*{theorem*}{Theorem}
\newtheorem{definition}[lemma]{Definition}
\newtheorem{proposition}[lemma]{Proposition}
\newtheorem*{proposition*}{Proposition}
\newtheorem{corollary}[lemma]{Corollary}
\newtheorem{remark}[lemma]{Remark}
\newtheorem{notation}[lemma]{Notation}
\newtheorem{result}{Theorem}
\newcommand{\QQ}{\mathbb{Q}}
\newcommand{\ZZ}{\mathbb{Z}}
\newcommand{\NN}{\mathbb{N}}
\newcommand{\inv}{^{-1}}
\newcommand{\comp}{\circ}
\newcommand{\abs}[1]{\left|#1\right|}
\newcommand{\norm}[1]{\left|\left|#1\right|\right|}
\newcommand{\mc}[1]{\mathcal{#1}}
\newcommand{\without}[1]{\backslash\{#1\}}
\newcommand{\conj}{\mathrm{Conj}}
\newcommand{\rf}{\mathrm{Rf}}
\newcommand{\defeq}{\stackrel{\mathclap{\normalfont\mbox{\tiny def}}}{=}}
\title{Independence of the conjugacy problem and conjugacy separability}
\author{Lukas Vandeputte\footnote{The author  kindly acknowledges the support by the group of science Engineering and Technology at KU Leuven Campus Kulak.}}
\begin{document}
	\maketitle
	\sloppy
	\begin{abstract}
		We construct a class of finitely generated groups which have arbitrarily large conjugacy separability function, but in which the conjugacy problem can be solved in polynomial time, demonstrating that the McKinsey algorithm for the conjugacy problem can have complexity which lies arbitrarily far from the optimum.
	\end{abstract}
	\section{Introduction}
	The word problem and the conjugacy problem are two fundamental decision problems in group theory. They ask if it is possible for a given group $G$ with generating set $S$ to determine whether a given word in $S$ is trivial in $G$, or whether two given words are conjugate in $G$ respectively.
	It has been shown for general groups that this is not possible, there exists some explicit finitely presented group $G$ in which it is impossible to determine both of these \cite{Collins1986A}.
	
	On the other hand there are a lot of classes of groups for which these problems are solvable.
	Examples include the finitely generated free groups, Gromov hyperbolic groups, polycyclic groups, braid groups etc. \cite{Epstein2006The,Remeslennikov1971Finite,Garside1969The}.
	Another class of groups for which the word problem and conjugacy problem are solvable, are the recursively-presented groups with quotient enumeration that are respectively
	residually finite or conjugacy separable.
	
	Here a group $G$ is \textbf{residually} finite if for any $g\in G\without{1}$, there exists a finite quotient $\pi:G\rightarrow Q$ such that $\pi(g)\neq 1$. Similarly a group is \textbf{conjugacy separable} if for any pair of non conjugate elements $g_1\nsim g_2\in G$, there exists such a quotient $\pi$ such that $\pi(g_1)\nsim\pi(g_2).$
	
	For these groups, the word problem and conjugacy problem can be solved using a McKinsey type algorithm \cite{McKinsey1943The,Malcev1985}. 	
	To describe the efficiency of these algorithms, one needs to measure the size of the quotients $Q$ above.
	For this, we make use of the \textbf{residual finiteness growth} $\rf_G$ as introduced in \cite{bou2010quantifying} and \textbf{conjugacy separability function} $\conj_G$ as introduced in \cite{Lawton2017Decision}.
	
	These are defined as 
	$$
	\rf_{G,S}= \max \left\{\min\{\abs{Q}\mid \pi:G\rightarrow Q,\pi(g)\neq 1 \}\Big\vert g\in G\without{1},\:\norm{g}_S\leq n\right\}
	$$
	and
	$$
	\conj_{G,S}(n)= \max \left\{\min\{\abs{Q}\mid \pi:G\rightarrow Q,\pi(g_1)\nsim\pi(g_2) \}\Big\vert g_1,g_2\in G, g_1\nsim g_2,\:\norm{g_1}_S,\norm{g_2}_S\leq n\right\}
	$$
	where $\norm{g}_S$ denotes the \textbf{word norm}, i.e. the minimal integer $n$ such that there exist $s_1,s_2,\cdots s_n\in S$ and $\epsilon_1,\epsilon_2,\cdots \epsilon_n\in\{-1,1\}$ satisfying $g=s_1^{\epsilon_1}s_2^{\epsilon_2}\cdots s_n^{\epsilon_n}$.

	In particular, groups with small residual finiteness growth/conjugacy separability function have efficient solutions to the word problem/conjugacy problem. However Kharlampovich, Myasnikov and Sapir demonstrated in \cite{Kharlampovich} that at least in the word problem case, the converse does not hold.

	They constructed a family of groups which simultaneously have a word problem of predetermined complexity, and an arbitrarily high residual finiteness growth.
	
	It is unclear, however, whether the properties of these groups translate well to conjugacy. In particular is it unclear whether these groups have an efficiently solvable conjugacy problem, and whether these groups are conjugacy separable. It thus remains open whether or not an efficient conjugacy problem implies low conjugacy separability.
	
	In this paper, we address this gap by demonstrating the following theorem:
	
	\begin{result}\label{prop:main}
		There exists a family $\mc G$ of recursively-presented, finitely-generated, conjugacy separable groups with quotient enumeration such that for each $G\in \mc G$, the conjugacy problem on $G$ can be solved in polynomial time, but such that for any computable function $f$, there exists some $G\in\mc G$ such that $\conj_G\geq f$.
	\end{result}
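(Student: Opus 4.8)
The plan is to combine the construction of Kharlampovich--Myasnikov--Sapir with a wreath-product-type or extension construction that has good conjugacy-theoretic behavior, so that the hard residual-finiteness growth of the building blocks gets transferred to a hard conjugacy separability function, while the conjugacy problem stays easy. Concretely, I would start from the KMS family $\{H_k\}$ of finitely generated residually finite groups with word problem of prescribed (polynomial, say) complexity and arbitrarily large residual finiteness growth $\rf_{H_k}$. The key elementary observation to exploit is that residual finiteness of $g \neq 1$ is the same as conjugacy separability of the pair $(g,1)$ only when one controls the conjugacy class of $g$; so to make $\conj_G$ large I want elements $g$ whose nontriviality is hard to detect in finite quotients but whose conjugacy class in $G$ is a singleton (or at least recognizably small). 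A clean way to force this is to pass to a central extension or to embed $H_k$ so that the relevant hard-to-separate elements become \emph{central}: a central element $z$ satisfies $z \sim 1$ iff $z = 1$, and $\pi(z) \sim 1$ iff $\pi(z) = 1$, so for central elements the conjugacy separability function literally contains the residual finiteness growth restricted to the center. Thus $\conj_G(n) \geq \rf_{Z(G)\hookrightarrow G}(n)$ along a suitable sequence, which can be made to dominate any computable $f$.

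The main steps, in order: (1) Fix a computable function $f$; invoke the KMS construction to get a finitely generated, recursively presented, residually finite group $H$ with word problem solvable in polynomial time and $\rf_H \geq f'$ for a suitable $f' \succeq f$. (2) Build $G = G(H)$ as a group in which a recursively-indexed family of ``hard'' nontrivial elements of $H$ is realized by central (or otherwise conjugacy-isolated) elements whose word norm is controlled linearly in terms of their norm in $H$ — for instance via a suitable central extension, or by taking $H \times \mathbb{Z}$-style gadgets, or by a Mihailova/Rips-type embedding adapted to preserve conjugacy; one must check that this $G$ is still finitely generated, recursively presented, and admits quotient enumeration (these pass through the standard operations). (3) Verify that $G$ is \emph{conjugacy separable}: this requires showing that the construction in (2) does not destroy conjugacy separability of the non-central part — here I expect to need that $H$ is not merely residually finite but conjugacy separable, or to choose a construction (e.g.\ a direct product with a conjugacy separable group, or a specific HNN/amalgam with conjugacy separable vertex groups and the right conditions) for which conjugacy separability is preserved; alternatively one builds $G$ so that the only obstruction to conjugacy separability is detection of central elements, which is governed by residual finiteness of $H$, which holds. (4) Verify that the conjugacy problem in $G$ is solvable in polynomial time: reduce conjugacy in $G$ to the word problem in $H$ plus a bounded amount of linear algebra / combinatorics coming from the extension, each step polynomial. (5) Deduce the lower bound $\conj_G \geq \rf_H|_{\text{center}} \geq f$ along the family, using that word norms in $G$ of the central test elements are polynomially (indeed linearly) related to their $H$-norms, so the growth is not washed out by the re-parametrization.

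The principal obstacle is Step (3) together with the compatibility between Steps (3) and (4): conjugacy separability is notoriously \emph{not} preserved by most group-theoretic operations (it fails even for some finite extensions and some amalgams), so I must pick the construction in (2) very carefully — threading the needle between ``structured enough that conjugacy is easy and conjugacy separability survives'' and ``opaque enough in finite quotients that $\conj_G$ blows up.'' The natural candidate is a construction where the hard elements sit in the center: centrality trivializes the conjugacy \emph{problem} for those elements (it becomes the word problem, hence polynomial) and simultaneously makes their conjugacy \emph{separation} exactly as hard as their residual separation, which is the quantity KMS made large. So I would aim for $G$ to be a central extension $1 \to C \to G \to \bar G \to 1$ with $\bar G$ conjugacy separable and easy, and $C \leq Z(G)$ chosen so that the cocycle encodes the KMS hardness; then conjugacy separability of $G$ follows from conjugacy separability of $\bar G$ plus residual finiteness of the pair $(G,C)$ (a condition I would state and verify), while the lower bound is immediate. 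Checking that such a central extension exists with all the finiteness and recursiveness properties, and that its word/conjugacy norms behave well, is the technical heart; everything else is bookkeeping with the definitions of $\rf$ and $\conj$ given in the excerpt.
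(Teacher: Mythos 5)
You have correctly isolated the mechanism that the paper itself exploits: place the elements that are hard to detect in finite quotients inside the centre, so that their conjugacy class is a singleton (conjugacy testing becomes equality testing) while separating them from $1$ costs exactly their residual-finiteness price; in the paper these are the central elements $c_{2^{i}}$, given prime order $d(i)$ and word norm roughly $2^{i}$, whence $\conj_{G_d}(n)\geq\rf_{G_d}(n)\geq d(\lfloor\log_2(Cn)\rfloor)$. However, your plan has a genuine gap at what you yourself call its technical heart, step (3). The principle you propose to rely on --- that a central extension $1\to C\to G\to \bar G\to 1$ is conjugacy separable once $\bar G$ is conjugacy separable and $C$ is separated in finite quotients --- is false in general. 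The problematic case is precisely $\bar g_1\sim\bar g_2$: after conjugating you must separate $g_1$ from $g_1c$ with $1\neq c\in C$, and a finite quotient in which $c$ survives may still turn the image of $c$ into a commutator $[x,g_1]$, making the images conjugate even though $c$ is nontrivial there. The paper has to prove a structural fact special to its construction, \cref{prop:centAndCommutatorsDisjoint} (no nontrivial element of $C$ is a commutator in the quotients $G_{d,I}$), which hinges on the antisymmetrised relation $[a_i,b_j][b_i,a_j]=c_{j-i}$, and then carry out the long case analysis of \cref{prop:conjsep} through the virtually nilpotent quotients $G_{d,I}$. Since your step (2) never fixes a concrete extension (``suitable central extension, gadgets, or a Mihailova/Rips-type embedding''), there is nothing in the proposal that could replace this argument, and the general criterion you would lean on does not hold.

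Two further points. Importing the Kharlampovich--Myasnikov--Sapir groups is both unnecessary and unhelpful here: the paper does not use them, and what reconciles the large lower bound with a polynomial-time conjugacy problem is not a pre-existing hard group but a numerical function $d$ that is arbitrarily large yet decidable and computable in time polynomial in its own value (conditions \ref{cond:fastcompare} and \ref{cond:fastcompute}, supplied by \cref{prop:LargeNiceExists}); these are exactly what lets \cref{prop:conjprop} test divisibility of the polynomially bounded central coordinates by $d(j)$, after \cref{prop:redconjprob} has produced an explicit conjugator modulo $C$ so that \cref{prop:centAndCommutatorsDisjoint} reduces conjugacy to equality. Finally, quotient enumeration does not simply ``pass through the standard operations'': the groups are recursively but not finitely presented, so certifying that a finite group is a quotient means verifying infinitely many relations, and the paper needs a separate argument (\cref{prop:checkIfQuotient,prop:QuoteintEnumeration}), again resting on \ref{cond:fastcompare} and \ref{cond:fastcompute}. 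So while your central-extension heuristic points in the right direction, the explicit construction, the conjugacy separability proof, and the effectivity bookkeeping are precisely the content that is missing.
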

	This shows that also for the conjugacy problem, McKinsey type algorithms can be arbitrarily far from optimal.
	
	We start in \cref{sec:thegroup} by describing a bigger class of groups and show in \cref{sec:separability} that these groups are conjugacy separable. Afterwards in \cref{sec:functions} we restrict to a smaller class to guarantee efficient solvability of the conjugacy problem, and we show that these groups have efficient quotient enumeration, and that the residual finiteness function (and thus the conjugacy separability function) for these groups can be arbitrarily large.

	\section{The construction}\label{sec:thegroup}
	Denote the commutator $[g_1,g_2]=g_1g_2g_1\inv g_2\inv$ and denote the conjugate $g^h=h\inv gh$.
	We start by introducing the family of groups we will use throughout the article.
	\begin{definition}
		Let $G_0$ be the group with presentation$$
		\left\langle \{t,a_i,b_i,c_i\mid i\in\ZZ\}\Bigg\vert
		\begin{matrix}
			[[x,y],z]&x,y,z\in\{a_i,b_i\mid i\in \ZZ\}\\
			%			\langle \{a_i,b_i\mid i\in \ZZ\}\rangle &\text{ is $2$-step nilpotent}\\
			[a_i,b_j][b_i,a_j]=c_{j-i}&\forall i,j\in \ZZ\\
			ta_it\inv=a_{i+1}&\forall i\in \ZZ\\
			tb_it\inv=b_{i+1}& \forall i\in \ZZ\\
		\end{matrix}
		\right\rangle
		$$
		and let $D_0$ be the group with presentation
		$$
		D=\left\langle \{a_i,b_i,c_i\mid i\in\ZZ\}\Bigg\vert
		\begin{matrix}
			[[x,y],z]&x,y,z\in\{a_i,b_i\mid i\in \ZZ\}\\
	%		D &\text{ is $2$-step nilpotent}\\
			[a_i,b_j][b_i,a_j]=c_{j-i}&\forall i,j\in \ZZ\\
		\end{matrix}
		\right\rangle.
		$$
	\end{definition}
	Notice that $D_0$ is $2$-step nilpotent and that all elements $c_{i}$ are central.
	Let $\varphi_i$ be the automorphism of $D_0$ mapping $a_j$ and $b_j$ to $a_{i+j}$ and $b_{i+j}$ and mapping $c_j$ to $c_j$. One easily checks that this morphism preserves the relations and is thus well defined, furthermore one sees that $\varphi_i\comp\varphi_j=\varphi_{i+j}$. In particular, we can use it to define the semi-direct product $D_0\rtimes_{\varphi}\ZZ$. This group is easily seen to be isomorphic to $G_0$. In what follows, we thus identify the two with one another.
	Notice that $G_0$ is generated by $t,a_0$ and $b_0$ and is thus finitely generated.
	\begin{notation}
		An element of $G_0$ will typically be denoted with either a variable $g$, or with a pair $(h,n)$ which should be interpreted as an element of $D_0\rtimes_\varphi\ZZ$.
	\end{notation}
	
	As the elements $c_i$ are central in $D_0$, and as $\varphi$ leaves them invariant, we have that these elements are central in $G_0$. We define a class of groups, by taking central quotients of $G_0$.
	\begin{definition}
		Let $d:\NN\rightarrow\NN$, then we define the group $G_d$ as$$
		\frac{G_0}{N_d}
		$$
		where $N_d$ is the central subgroup generated by the elements $\{c_{2^{i}}^{d(i)}\mid i\in \NN\}$.
	\end{definition}
	If $d$ is the constant $0$ function then we obtain $G_d=G_0$, motivating our notation.
	Notice that if $d$ is computable, then both $G_0$ and $G_d$ are recursively presented. We also introduce a class of quotients of $G_0$ and $G_d$.
	\begin{definition}
		For $I$ a strictly positive natural number, we define $G_{0,I}$ and $G_{d,I}$ similar to $G_0$ and $G_d$, where the indices $i$ take values in $\frac{\ZZ}{I\ZZ}$ instead of $\ZZ$ and where we impose the additional relation $t^{I}=1$.
		For $G_{0,I}$ we thus obtain the presentation:
		$$
		\left\langle \{t,a_i,b_i,c_i\mid i\in\frac{\ZZ}{I\ZZ}\}\Bigg\vert
		\begin{matrix}
			[[x,y],z]&x,y,z\in\{a_i,b_i\mid i\in \frac{\ZZ}{I\ZZ}\}\\
			%			\langle \{a_i,b_i\mid i\in \ZZ\}\rangle &\text{ is $2$-step nilpotent}\\
			[a_i,b_j][b_i,a_j]=c_{j-i}&\forall i,j\in \frac{\ZZ}{I\ZZ}\\
			ta_it\inv=a_{i+1}&\forall i\in \frac{\ZZ}{I\ZZ}\\
			tb_it\inv=b_{i+1}& \forall i\in \frac{\ZZ}{I\ZZ}\\
			t^{I}
		\end{matrix}
		\right\rangle
		$$
		Denote $\pi_I$ the natural projection from $G_{d}$ to $G_{d,I}$ and denote $K_I$ its kernel.
	\end{definition}
	We denote the subgroup $\langle{\{a_i,b_i\mid i\in\frac{\ZZ}{I\ZZ}\}}\rangle=\pi_I(D_0)<G_{d,I}$ by $D_{d,I}$, or just with $D$ if no confusion is possible.
	The groups $G_{d,I}$ are finitely generated virtually nilpotent. Indeed, $\frac{G_{d,I}}{D_{d,I}}$ is finite cyclic and $D_{d,I}$ is nilpotent by definition.
		
	We denote with $D'_{d,I}$ the derived subgroup of $D_{d,I}$ and with $C$ the (central) subgroup of $G_{d,I}$ generated by the elements $c_i$. It is clear that $C$ is contained in $D'$. However, we demonstrate that $C$ does not contain any commutators.
	\begin{lemma}\label{prop:centAndCommutatorsDisjoint}
		Let $g_1,g_2\in G_{d,I}$, then $[g_1,g_2]\in C$ if and only if $[g_1,g_2]=1$.
	\end{lemma}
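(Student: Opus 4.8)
The plan is to use the $2$-step nilpotency of $D_{d,I}$ to turn the statement into linear algebra on an exterior square. First I would reduce to the case $d=0$: since $N_d=\ker(G_{0,I}\to G_{d,I})$ is central and contained in $C$, any pair $g_1,g_2$ in $G_{d,I}$ with $[g_1,g_2]\in C$ lifts to a pair in $G_{0,I}$ whose commutator still lies in $C$, so it is enough to prove the lemma for $G:=G_{0,I}$, $D:=D_{0,I}$. Being $2$-step nilpotent, $D$ has $D'$ central, and the commutator map factors through an additive surjection $\Lambda^2 V\twoheadrightarrow D'$, where $V:=D/D'$ is free abelian on the images of the $a_i,b_i$ ($i\in\ZZ/I\ZZ$). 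Under this surjection $C$ is the image of the subgroup $L\le\Lambda^2 V$ spanned by all $a_i\wedge b_j-a_j\wedge b_i$, and the kernel is $R=(\varphi-1)L$, where $\varphi$ denotes the order-$I$ shift $a_i\mapsto a_{i+1}$, $b_i\mapsto b_{i+1}$; in particular $R\subseteq L$. So for $x,y\in D$ one has $[x,y]\in C\iff\bar x\wedge\bar y\in L$, while $[x,y]=1\iff\bar x\wedge\bar y\in R$.

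The crucial point is a small fact about $L$. Splitting $V=A\oplus B$ into the span of the $a_i$ and the span of the $b_i$ gives $\Lambda^2 V=\Lambda^2 A\oplus(A\otimes B)\oplus\Lambda^2 B$ with $L$ contained in the middle summand; identifying $A$ with $B$ via $a_i\leftrightarrow b_i$, one has $A\otimes B\cong\ZZ^I\otimes\ZZ^I$, $L$ becomes the group of antisymmetric tensors, and $(A\otimes B)/L\cong\sym^2(\ZZ^I)$ --- a homogeneous component of a polynomial ring, hence contained in an integral domain. I would then show that the only decomposable bivector in $L$ is $0$: if $v\wedge w\in L$, then its $\Lambda^2 A$- and $\Lambda^2 B$-components $v^A\wedge w^A$ and $v^B\wedge w^B$ vanish, forcing $v^A,w^A$ proportional and $v^B,w^B$ proportional; the image of $v\wedge w$ in $\sym^2(\ZZ^I)$ is then a difference of two products of pairwise-proportional linear forms, which by the domain property is zero, and therefore $v\wedge w=0$.

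This already settles the case $g_1,g_2\in D$: here $[g_1,g_2]$ is the image of the decomposable bivector $\bar g_1\wedge\bar g_2$, so $[g_1,g_2]\in C$ gives $\bar g_1\wedge\bar g_2\in L$, hence $\bar g_1\wedge\bar g_2=0$ and $[g_1,g_2]=1$. For general $g_1,g_2$ I would write $g_i=(h_i,n_i)$ in $D\rtimes(\ZZ/I\ZZ)$; the commutator always lies in $D$, and projecting onto the metabelian quotient $G/D'\cong V\rtimes(\ZZ/I\ZZ)$ shows $[g_1,g_2]\in D'$ precisely when $(1-\varphi^{n_2})\bar h_1=(1-\varphi^{n_1})\bar h_2$, which we may assume. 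Computing $[g_1,g_2]$ in $D'=\Lambda^2 V/R$ via a normal form in the free $2$-step nilpotent cover of $D$, and then projecting away the $A\otimes B$-summand (which kills $C$), the hypothesis $[g_1,g_2]\in C$ forces the $\Lambda^2 A$- and $\Lambda^2 B$-components of $[g_1,g_2]$ to vanish --- equivalently, the images of $g_1,g_2$ commute already in the ``$a$-only'' quotient $G/\langle\!\langle b_i,c_i\rangle\!\rangle$ and the ``$b$-only'' quotient --- which constrains $\bar h_1,\bar h_2$ up to the action of $\varphi$. Feeding this back, the surviving $A\otimes B$-component of $[g_1,g_2]$ should come out divisible by $1-\varphi^{\gcd(n_1,n_2)}$ in the $\ZZ[\varphi]$-module $A\otimes B$ while lying in $L$, and one checks $(1-\varphi^m)(A\otimes B)\cap L=(1-\varphi^m)L\subseteq(\varphi-1)L=R$, giving $[g_1,g_2]=0$ in $D'$.

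I expect the main obstacle to be exactly this last, twisted case: organizing the cocycle contributions to $[g_1,g_2]$ so that its $A\otimes B$-component is visibly divisible by $1-\varphi^{\gcd(n_1,n_2)}$, and proving the module identity $(1-\varphi^m)(A\otimes B)\cap L=(1-\varphi^m)L$ over the ring $\ZZ[\varphi]/(\varphi^I-1)$. For the analogous statement over $G_0$ the relevant ring is the Laurent polynomial ring $\ZZ[\varphi^{\pm1}]$, over which $A\otimes B$ and $L$ are free and both claims are routine; over $\ZZ[\varphi]/(\varphi^I-1)$ there are zero divisors, so one needs an explicit description of $L$ and of $A\otimes B$ as $\ZZ[\varphi]$-modules. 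The remaining ingredients --- the splitting of $\Lambda^2 V$, the integral-domain argument, and the reduction to $G_{0,I}$ --- are straightforward.
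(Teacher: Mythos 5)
Your reduction to $d=0$ and your treatment of the untwisted case $g_1,g_2\in D$ look sound: identifying $D'_{0,I}$ with $\Lambda^2V/R$ and $C$ with $L/R$, and showing that the only decomposable bivector in $L$ is zero via the splitting $\Lambda^2V=\Lambda^2A\oplus(A\otimes B)\oplus\Lambda^2B$ together with the symmetric-square/domain trick, is a complete (indeed more detailed) substitute for the paper's reduction of its $n_t=0$ case to a computation in the free $2$-step nilpotent group on $a_0,a_1,b_0,b_1$.

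The twisted case, however, is where the real content of the lemma lies, and there your sketch has a genuine gap. The module identity $(1-\varphi^m)(A\otimes B)\cap L=(1-\varphi^m)L$ that you propose to ``check'' is false over $\ZZ[\varphi]/(\varphi^I-1)$, and even the weaker inclusion you actually need, $(1-\varphi^m)(A\otimes B)\cap L\subseteq R=(\varphi-1)L$, fails. Take $I=4$, $m=2$ and $\ell=a_0\otimes b_2-a_2\otimes b_0\in L$: then $\ell=(1-\varphi^2)(a_0\otimes b_2)$, but on the ``difference two'' summand of $L$ (spanned by $\ell$ and $\varphi\ell$, on which $\varphi^2$ acts as $-1$) one has $(1-\varphi^2)L\cap\langle\ell,\varphi\ell\rangle=2\langle\ell,\varphi\ell\rangle$ and $(\varphi-1)(x\ell+y\varphi\ell)=-(x+y)\ell+(x-y)\varphi\ell$, which is never $\ell$; so $\ell$ lies in $(1-\varphi^m)(A\otimes B)\cap L$ but in neither $(1-\varphi^m)L$ nor $R$. (This $\ell$ is precisely the class of $c_{I/2}$, a torsion element of $C$ for even $I$, so no argument that controls the cross component only through such a formal divisibility statement can close the case.) Moreover the divisibility itself is only asserted (``should come out divisible''), and you never perform the reduction the paper makes at the outset, using $[g_1g_2,g_2]=[g_1,g_2]=[g_1,g_2g_1]$ and Euclid to assume $g_2\in D$, which is what makes the twisted case tractable: the paper then records the constraint $\varphi_{n_t}(h_2)D'=h_2D'$, and instead of a module identity it raises $[g_1,g_2]$ to the power $I_0$ and telescopes the conjugates $t^{in_t}[g_1,g_2]t^{-in_t}$ to reduce to the untwisted case (using, for $I=0$, the $\varphi$-invariant splitting $D'=C\oplus K$). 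Some mechanism of this kind, exploiting that the element is an actual commutator subject to these constraints and not merely an element of $(1-\varphi^m)(A\otimes B)\cap L$, is exactly what your proposal is missing, so the twisted case remains unproved.
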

	\begin{proof}
		It suffices to show the above statement for $d=0$. Notice that $[g_1g_2,g_2]=[g_1,g_2]=[g_1,g_2g_1]$ and thus, using Euclid's algorithm, we can reduce to the case where $g_1=(h_1,n_t)$ and $g_2=(h_2,0)$. 
		First assume $n_t=0$.
		Then we have to show that the same result holds in $D_{0,I}$.
		In this case it suffices to show that in $N_2(a_0,a_1,b_0,b_1)$, the free $2-$step nilpotent group on $4$ generators, if a commutator is of the form $[a_0,b_1]^{d_1}[b_0,a_1]^{d_2}$, then one of the $d_i$ must vanish. This can be checked easily.
				
		Now assume $n_t\neq 0$.
		the commutator $[g_1,g_2]$ is then given by $(h_1 \varphi_{n_t}(h_2)h_1\inv h_2\inv,0)$.
		If $[g_1,g_2]\in C$, then we certainly need that it lies in $D'$ or thus do we have that $\varphi_{n_t}(h_2)h_2\inv \in D'$.
		We can thus rewrite $[g_1,g_2]=(h_1,0) (1,n_t)(h_2,0)(1,-n_t) (h_1\inv,0) (h_2\inv,0)$ as$$
		([h_1,h_2],0)(\varphi_{n_t}(h_2)h_2\inv,0).$$
		
		Suppose that $I=0$.
		In this case, $\varphi_{n_t}(h_2)h_2\inv \in D'_{0,0}$ implies that $h_2\in D'_{0,0}$ and thus $[h_1,h_2]=1$.

		The abelian group $D'$ decomposes as a direct sum $C+K$ where $K$ is generated by elements of the form $[a_i,a_j],[b_i,b_j]$ and $[a_i,b_j][a_j,b_i]$. Notice that $\varphi_{n_t}$ preserves these summands. Furthermore as $\varphi_{n_t}$ leaves elements of $C$ invariant, it follows that if $\varphi_{n_t}(h_2)h_2\inv\in C$, then $\varphi_{n_t}(h_2)h_2\inv=1$.

		Suppose on the other hand that $I\neq 0$ and let $I_0$ be minimal such that $I\mid I_0n_t$.
		As $C$ is torsion free, it suffices to show that $[g_1,g_2]^{I_0}$ vanishes. As $[g_1,g_2]$ is central, this is equivalent to the vanishing of the product \begin{align*}
			&\prod_{i=1}^{I_0} t^{i\cdot n_t}[g_1,g_2]t^{-i\cdot n_t}\\
			=(&\prod_{i=1}^{I_0} \varphi_{i\cdot n_t}[h_1,h_2] \prod_{i=1}^{I_0} \varphi_{(i+1)n_t}(h_2)\varphi_{i\cdot n_t}(h_2)\inv  ,0)			
		\end{align*}
		As $\varphi_{I_0n_t}=\varphi_0$, the second of these products vanishes.
		Now using that $\varphi_{n_t}(h_2)D'=h_2D'$, we can further rewrite this as$$
		([\prod_{i=1}^{I_0}\varphi_{i\cdot n_t}(h_2),h_2],0).
		$$
		As $[g_1,g_2]\in C$, so must be the above element, and thus by the $n_t=0$ case we know that it must vanish.
		
	\end{proof}
	
	\section{Conjugacy separability}\label{sec:separability}
	In this section we demonstrate the following proposition:
	\begin{proposition}\label{prop:conjsep}
		$G_d$ is conjugacy separable for any choice of $d$.
	\end{proposition}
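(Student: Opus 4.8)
The plan is to separate conjugacy classes of $G_d$ using only the finite quotients obtained by composing the projections $\pi_I\colon G_d\to G_{d,I}$ with finite quotients of $G_{d,I}$. Each $G_{d,I}$ is finitely generated and virtually nilpotent, hence conjugacy separable, so it suffices to show that the family $\{\pi_I\}_{I\ge 1}$ already detects non-conjugacy: whenever $g_1\nsim g_2$ in $G_d$, there is some $I$ with $\pi_I(g_1)\nsim\pi_I(g_2)$ in $G_{d,I}$. Write $g_j=(h_j,n_j)$ in $D_d\rtimes_\varphi\ZZ$ with $D_d=D_0/N_d$ (which is $2$-step nilpotent). Conjugate elements have equal image under $G_d\to\ZZ$, so if $n_1\neq n_2$ then any $I$ not dividing $n_1-n_2$ works through $G_{d,I}\to\ZZ/I\ZZ$; hence assume $n_1=n_2=:n$. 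A direct computation in the semidirect product then shows that $(h_1,n)$ and $(h_2,n)$ are conjugate in $G_d$ exactly when there are $m\in\ZZ$ and $k\in D_d$ with $h_2=k\,\varphi_m(h_1)\,\varphi_n(k\inv)$; for $n=0$ this reads $h_2=k\varphi_m(h_1)k\inv$, i.e.\ $h_2$ is conjugate in $D_d$ to $\varphi_m(h_1)$.

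Passing to $D_d/D_d'$ — free abelian on the $a_i,b_i$, with $\varphi$ acting by shifting indices — shows that only finitely many residue classes of $m$ can satisfy the abelianised identity (the degenerate case $h_1\in D_d'$ being similar and easier); and for each admissible $m$, after fixing one $k_0$ realising that identity modulo $D_d'$, the residual freedom in $k$ contributes a fixed subgroup of $D_d'$. Thus $g_1\nsim g_2$ becomes a finite list of statements of the form ``$h_2\varphi_m(h_1)\inv$ does not lie in a certain coset of an explicit subgroup of the abelian group $D_d'$''. This is where \cref{prop:centAndCommutatorsDisjoint} enters. As in its proof, $D_d'$ decomposes as $C\oplus K$, where $K$ (generated by the commutators other than the $c_i$) is torsion-free and unaffected by $d$, while $C$ carries all the $d$-dependence through the relations $c_{2^i}^{d(i)}=1$, and $\varphi_n$ fixes $C$ pointwise. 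For $n\neq 0$ the relevant subgroup is $(\mathrm{id}-\varphi_n)(D_d')=(\mathrm{id}-\varphi_n)(K)\subseteq K$; for $n=0$ it has the form $[D_d,\varphi_m(h_1)]$, and \cref{prop:centAndCommutatorsDisjoint} tells us that every element of $[D_d,\varphi_m(h_1)]$ is a commutator, so $[D_d,\varphi_m(h_1)]\cap C=1$ and the projection $C\oplus K\to K$ is injective on it. In either case the required non-membership splits into a statement about the $K$-component of $h_2\varphi_m(h_1)\inv$ (living in a quotient of $K$ by an explicit submodule, which is residually finite and insensitive to $d$) and, conditionally, a statement that its $C$-component is a prescribed nonzero element of the finitely generated abelian group $C$ (again residually finite). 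Both are visible in finite quotients, and, choosing $I$ larger than the diameter of the supports of $h_1,h_2$ in the generators $a_i,b_i$, they remain faithfully visible after applying $\pi_I$.

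The main obstacle is exactly this interaction with the central subgroup $C$: one must rule out a non-conjugacy of $G_d$ that is forced by $C$ yet collapses in every finite quotient, i.e.\ a failure of the ``going up'' property along the central extension $1\to C\to G_d\to G_d/C\to 1$, which does occur for general central extensions. \cref{prop:centAndCommutatorsDisjoint} is precisely what rescues it: because commutators avoid $C$, the ``conjugation subgroups'' of $D_d'$ occurring above sit transversally to $C$, so the $C$-component of any conjugacy obstruction is genuine and can be separated on its own inside the finitely generated abelian group $C$, where the exotic torsion causes no trouble. The cases $n=0$ and $n\neq 0$ are handled in exactly the same way, the only difference being which $\varphi$-twisted conjugation formula one analyses; once the structure above is in place, assembling a single finite quotient that factors through some $\pi_I$ and simultaneously kills the relevant subgroup and detects the offending element of $C$ (or of $K$) is routine.
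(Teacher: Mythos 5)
Your overall skeleton agrees with the paper's: reduce along $G_d\to\ZZ$, aim to detect non-conjugacy already in some $G_{d,I}$, and then invoke conjugacy separability of finitely generated virtually nilpotent groups; the twisted-conjugacy formula $h_2=k\,\varphi_m(h_1)\,\varphi_n(k\inv)$ and the role of \cref{prop:centAndCommutatorsDisjoint} in splitting $D'=C\oplus K$ are also as in the paper. But there is a genuine gap at the decisive step, namely the sentence asserting that the coset non-membership conditions ``remain faithfully visible after applying $\pi_I$'' once $I$ exceeds the diameter of the supports. The set of twisted conjugators in $G_{d,I}$ is \emph{not} the image of the set of twisted conjugators in $G_d$: for $n\neq 0$ the residual freedom in $k$ in the infinite group is indeed only $(\mathrm{id}-\varphi_n)(D')$, because $\ker(\mathrm{id}-\varphi_n)$ on $D/D'$ is trivial there, but in $G_{d,I}$ (with $n\mid I$) this kernel contains the periodic elements such as $\tilde x_i=x_ix_{i+n}x_{i+2n}\cdots x_{i-n}$, which have no counterpart in $G_d$ and act non-trivially by commutation on $h_1$. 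Consequently the ``explicit subgroup of $D'_d$'' you quotient by genuinely grows when you pass to $G_{d,I}$, and non-membership upstairs does not transfer downstairs by a support bound alone. Ruling out these new conjugators is exactly the content of the paper's \cref{prop:XPhiXVanishes} (a careful count of which commutators $[h,p]$ with $\varphi_{n_t}(p)D'=pD'$ can be supported in a window of length comparable to $I_0$ when $I\geq 8I_0$), and in the $n=0$ cases the analogous issue is handled by \cref{prop:InD2CommutantLocal} together with the support/retraction arguments; none of this is supplied or replaced by anything in your write-up.

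A secondary, smaller point: in the degenerate case $h_1\in D_d'$ (the paper's case $g_1,g_2\in D'$), which you call ``similar and easier'', the conjugacy condition degenerates to $h_2=\varphi_m(h_1)$ for some $m$, and in $G_{d,I}$ the parameter $m$ only matters modulo $I$; you still need an argument (the paper uses the support interval $S(g)$ modulo $C$ and injectivity of $\pi_I$ on the relevant window) that no wrap-around shift creates a coincidence, so this case also requires the same kind of quotient-level control rather than residual finiteness of abelian groups in the abstract. In summary, your reduction to coset conditions in the abelian group $D'_d$ is sound for $G_d$ itself, but the proof that these conditions are detected by the specific quotients $\pi_I$ — the heart of the paper's argument — is missing.
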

	In what follows, $x,y$ are standins for either $a$ of $b$, e.g. $x_i$ is either $a_i$ or $b_i$.
	Before we give the proof, we first give $2$ lemmas:
	
	\begin{lemma}\label{prop:XPhiXVanishes}
		Let $I,I_0,n_t>0$ be integers such that $I_0$ and $I$ are multiples of $n_t$ and such that $I\geq8I_0$. Let $p\in D_{d,I}$ such that $pD'_{d,I}=\varphi_{n_t}(p)D'_{d,I}$.
		Then for $h\in D_{d,I}$ if 
		$$[h,p]\in \langle a_{-I_0+1},b_{-I_0+1},\cdots,a_{I_0},b_{I_0}\rangle\:\langle \{[x_i,y_j]\inv[x_{i+n_t}y_{j+n_t}]\mid x,y\in\{a,b\}\:i,j\in \ZZ\}\rangle\: C,$$
		then $[h,p]\in\langle \{[x_i,y_j]\inv[x_{i+n_t}y_{j+n_t}]\mid x,y\in\{a,b\}\:i,j\in \ZZ\}\rangle\: C $.
	\end{lemma}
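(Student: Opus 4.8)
The plan is to turn the statement into a bilinear computation. First reduce to $d=0$: lifting $h,p$ along $D_{0,I}\to D_{d,I}$ only enlarges the right-hand side by $N_d\subseteq C$, so the case $d=0$ implies the general case. Since $D_{0,I}$ is $2$-step nilpotent, $[h,p]$ is central and $(\bar h,\bar p)\mapsto[h,p]$ is $\ZZ$-bilinear, where $\bar D:=D_{0,I}/D'_{0,I}\cong\ZZ^{2I}$ carries the cyclic shift $\varphi_{n_t}$, of order $q:=I/n_t$ (using $t^I=1$). The hypothesis $pD'=\varphi_{n_t}(p)D'$ says precisely that $\bar p$ is $\varphi_{n_t}$-fixed, i.e.\ its coordinate vector is $n_t$-periodic. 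Finally, as $[h,p]\in D'$ and $L\cap D'=L'$ (map $L$ onto the free abelian group on $\{a_i,b_i\mid i\in W\}$, where $W:=\{-I_0+1,\dots,I_0\}$), the hypothesis $[h,p]\in L\,M\,C$ upgrades to $[h,p]\in L'\,M\,C$ with $L'=\langle[x_i,y_j]\mid i,j\in W\rangle$; hence it suffices to prove that the class of $[h,p]$ in $D'/MC$ is trivial.

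I would then describe $D'/MC$ explicitly. The group $D'_{0,I}$ splits as an $aa$-, a $bb$- and an $ab$-block, free abelian on the $[a_i,a_j]$, on the $[b_i,b_j]$, and on the $[a_i,b_j]$ modulo the relations defining the $c_k$; those relations live entirely in the $ab$-block. Now $M=(\varphi_{n_t}-1)D'$ respects this splitting and $C$ sits inside the $ab$-block, so passing to the quotient turns each block into its $\langle\varphi_{n_t}\rangle$-coinvariants — $(\bigwedge^2\ZZ^{I})_{C_q}$ for the $aa$- and $bb$-blocks, and $(\operatorname{Sym}^2\ZZ^{I})_{C_q}$ for the $ab$-block, since killing the $c_k$ forces $[a_i,b_j]=[a_j,b_i]$. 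On their off-diagonal (residue) components these coinvariant modules are free abelian with basis the classes $\overline{[x_r,y_{r'+sn_t}]}$, where $r\neq r'$ range over residues mod $n_t$ and $s$ over $\ZZ/q\ZZ$; the diagonal components $r=r'$ are slightly more delicate (antisymmetry, the $a\leftrightarrow b$ identification, possible $2$-torsion) but are handled the same way.

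The crux is a comparison in the shift variable $s$. On one side, expanding $[h,p]$ bilinearly and reading off the coefficient of $\overline{[x_r,y_{r'+sn_t}]}$, the $n_t$-periodicity of the coordinates of $\bar p$ makes the internal orbit sum collapse, so the coefficient is independent of $s$: in the $aa$-block it equals $\hat\gamma_{r'}\sigma_r-\hat\gamma_r\sigma_{r'}$, where $\hat\gamma$ is the periodic $a$-coordinate vector of $\bar p$ and $\sigma_r$ is the sum of the $a$-coordinates of $\bar h$ over the $\varphi_{n_t}$-orbit of $r$, and the $bb$- and $ab$-blocks are analogous. On the other side, because $n_t\mid I_0$ and $I\ge 8I_0$, the arc $W$ meets each residue class mod $n_t$ in $2I_0/n_t$ consecutive block-indices, so the image of $L'$ in $D'/MC$ only involves basis vectors $\overline{[x_r,y_{r'+sn_t}]}$ with $s$ in an interval of length $\le 4I_0/n_t-1<q$ in $\ZZ/q\ZZ$. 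A function of $s$ that is simultaneously constant on $\ZZ/q\ZZ$ and supported on a proper subset must be identically zero; hence every coefficient of $[h,p]$ vanishes in $D'/MC$, i.e.\ $[h,p]\in MC$.

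The step demanding the most care is the $ab$-block bookkeeping: one must track how the defining relations of the $c_k$ collapse the tensor square to $\operatorname{Sym}^2$, how the induced symmetry $\overline{[a_{m_a},b_{m_b+sn_t}]}\sim\overline{[a_{m_b},b_{m_a-sn_t}]}$ interacts with the chosen normal form and with the window estimate, and that the relevant coinvariant components are torsion-free so that ``constant in $s$ plus proper support $\Rightarrow$ zero'' is legitimate. The bound $I\ge 8I_0$ is more generous than what the argument strictly needs; all it really uses is that the arc $W$ spans well under a full $\varphi_{n_t}$-period, with room to spare for passing to differences $s$ and for the $a\leftrightarrow b$ symmetrization.
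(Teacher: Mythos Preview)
Your argument is correct and is essentially the paper's proof recast in coinvariants language: both pass to $D'_{d,I}/MC$, use that $\bar p$ is $\varphi_{n_t}$-periodic to see that the image of $[h,p]$ is ``constant along the shift direction'' (you via $s$-independence of coefficients in $(\bigwedge^2\ZZ^I)_{C_q}$ and $(\operatorname{Sym}^2\ZZ^I)_{C_q}$, the paper via equal multiplicities of $[x_l,y_k]$ and $[x_l,y_{k+2I_0}]$ after rewriting against the orbit elements $\tilde x_i=\prod_u x_{i+un_t}$), and then conclude because the support of $L'$ is a proper arc in the $s$-variable. The one spot where your sketch is thinner is exactly the diagonal $r=r'$ block you flag: there the coinvariants carry $2$-torsion at $s=q/2$, and one needs either the paper's explicit cancellation $[a_l,a_{k+2I_0}]=[a_l,a_{2l-k-2I_0}]^{-1}$ (valid since $k+2I_0-l<4I_0\le I/2$) or the observation that the diagonal contribution of $[h,p]$ is $q$-torsion and hence supported only at $s=q/2$, which lies outside the window because $q=I/n_t\ge 8I_0/n_t$.
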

	\begin{proof}
		We work in the group $\frac{G_{d,I}}{\langle \{[x_i,y_j]\inv[x_{i+n_t}y_{j+n_t}]\mid x,y\in\{a,b\}\:i,j\in \ZZ\}\rangle C}$. In this group, commutators of elements in $\{a_0,a_1,\cdots,b_0,b_1,\cdots\}$ are up to a sign determined by their type ($(a,a),(a,b)$ or $(b,b)$), by their diameter (the diameter of $[a_i,b_j]$ is given by $\abs{i-j}$), and by their congruence class ${\{i\mod n_t,j\mod n_t\}}$.
		
		Let $\tilde x_i=x_i x_{i+n_t} x_{i+2n_t}\cdots x_{i-n_t}$, then we can rewrite $[h,p]$ as $$
		\prod_{i=0}^{n_t-1} [h_{i,a},\tilde a_i][h_{i,b},\tilde b_i]
		$$
		for some $h_{i,x}\in D$.
		Notice that after applying relations of the form $[x_i,y_j]\inv[x_{i+n_t}y_{j+n_t}]$, we have that ${[a_i,\tilde b_j]=[\tilde a_i,b_j]}$, we may thus assume that $h_{i,b}$ lies in $\langle b_j\rangle$.
		Furthermore, again by applying the same relations, we may assume that ${h_{i,a}\in\langle a_0,b_0,\cdots,a_{n_t-1},b_{n_t-1}\rangle}$ and that ${h_{i,b}\in \langle b_0,b_1,\cdots,b_{n_t-1}\rangle.}$
		Let $l,k\in\{-I_0,\cdots,I_0\}$, we start with the $(a,a)$ case and show that the number of times $[a_l,a_k]$ appears in $$
		\prod_{i=0}^{n_t-1} [h_{i,a},\tilde a_i][h_{i,b},\tilde b_i]
		$$
		is equal to the number of times $[a_l,a_{k+2I_0}]$ appears. Indeed $[a_i,\tilde a_j]$ can contributes one of these factors precisely when either $i\cong l,j\cong k\mod n_t$ or $j\cong l,i\cong k\mod n_t$. In both of these cases, $[a_i,\tilde a_j]$ contributes to both of these factors in equal amount: once if $i\not\cong j\mod n_t$, and $0$ times otherwise (here we use that $k+2I_0-l<4I_0\leq\frac{I}{2}$ to guarantee that $[a_l,a_{k+2I_0}]$ and $[a_l,a_{2l-k-2I_0}]$ cancel out). From the above it follows that if $[a_l,a_{k+2I_0}]$ does not appear in $$
		\prod_{i=0}^{n_t-1} [h_{i,a},\tilde a_i][h_{i,b},\tilde b_i]
		$$ then neither does $[a_l,a_k]$.
		
		The $(b,b)$ case is the same. For the $(a,b)$ case, if $l=k$, then $[a_l,b_l]$ appears exactly half as often as $[a_l,b_{l+2I_0}]=[a_l,b_{l-2I_0}]$. In the other cases, they again appear equally often.
		
		We thus have that $$
		\prod_{i=0}^{n_t-1} [h_{i,a},\tilde a_i][h_{i,b},\tilde b_i]\in \langle a_{-I_0+1},b_{-I_0+1},\cdots,a_{I},b_{I}\rangle
		$$
		implies $$\prod_{i=0}^{n_t-1} [h_{i,a},\tilde a_i][h_{i,b},\tilde b_i]=1$$

	\end{proof}

	\begin{lemma}\label{prop:InD2CommutantLocal}
		Let $I=0$ or let $I>0$.
		Let $h\in \frac{D_{d,I}}{C}$ and $c\in \frac{D'_{d,I}}{C}$. Let $I_0$ be such that 
		$$h,c\in H_{I,I_0} \defeq \langle
		a_{-I_0+1}C,b_{-I_0+1}C,\cdots,a_{I_0}C,b_{I_0}C
		\rangle, $$
		Suppose there exists some $g\in \frac{D}{C}$ such that $[g,h]=c$, then there exists some $g'\in H_{I,I_0}$ such that $[g',h]=c$.
	\end{lemma}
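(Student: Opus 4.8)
The plan is to pass to the abelianization and reduce the whole statement to a support-disjointness bookkeeping there. First I would note that since every $c_i$ lies in $C$, the group $D_{d,I}/C$ does not depend on $d$, so take $d=0$ and write $N:=D_{0,I}/C$. It is $2$-step nilpotent, so $N'\le Z(N)$, and its defining relations $[a_i,b_j][b_i,a_j]=1$ already lie in the derived subgroup, so the abelianization $A:=N/N'$ is free abelian on the images $\bar a_i,\bar b_i$ of the generators. As $N$ is $2$-step nilpotent, the commutator factors through an alternating biadditive map $A\times A\to N'$, which I also write $[\cdot,\cdot]$; in particular $[g,h]$ depends on $g,h$ only through $\bar g,\bar h\in A$.

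Next I would record the structure of $N'=D'_{0,I}/C$: it is free abelian with basis $\mathcal B$ consisting of the $[a_i,a_j]$ and $[b_i,b_j]$ (one per unordered pair $i\ne j$) together with the $[a_i,b_j]$, subject only to $[a_i,b_j]=[a_j,b_i]$. Each basis element has a well-defined \emph{support}, the set (of size $1$ or $2$) of indices occurring in it. Writing $\Sigma:=\{-I_0+1,\dots,I_0\}$ and letting $V\le A$ be the span of $\{\bar a_i,\bar b_i:i\in\Sigma\}$, I would check: (i) $H:=H_{I,I_0}$ maps onto $V$ with kernel $H'$, and $H'=H\cap N'$ is exactly the span of the basis elements of $N'$ whose support is contained in $\Sigma$; (ii) $A=V\oplus W$, where $W$ is the span of the images of the remaining generators. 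If $W=0$ then $H=N$ and there is nothing to prove, so assume $W\ne0$; when $I>0$ this is just the condition that $\Sigma$ is a proper subset of $\ZZ/I\ZZ$.

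The argument is then short. The hypotheses say $\bar h\in V$, that $c\in H\cap N'=H'$, and that some $\bar g\in A$ satisfies $[\bar g,\bar h]=c$. Split $\bar g=\bar g_V+\bar g_W$ along $A=V\oplus W$, so that $c-[\bar g_V,\bar h]=[\bar g_W,\bar h]$. The left-hand side lies in $H'$, since $[\bar g_V,\bar h]$ has both arguments in $V$. On the right-hand side, expanding $[\bar g_W,\bar h]$ over $\mathcal B$, every basis commutator that occurs arises from a pair $[x_i,y_j]$ whose first index $i$ is contributed by $\bar g_W$ and hence lies outside $\Sigma$; so $[\bar g_W,\bar h]$ lies in the span of those basis elements whose support is not contained in $\Sigma$, which is a complement to $H'$ inside $N'$. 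Being in $H'$ as well, it must vanish, so $c=[\bar g_V,\bar h]$. Finally, choosing any $g'\in H$ with $\bar{g'}=\bar g_V$ (possible since $V$ is the image of $H$ in $A$) gives $[g',h]=[\bar g_V,\bar h]=c$, as required.

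The one genuinely non-formal ingredient — and the main obstacle — is establishing that $N'=D'_{0,I}/C$ is free abelian with the stated basis and that "support" is well defined despite the identifications $[a_i,b_j]=[a_j,b_i]$; once that is in place everything else is manipulation of disjoint subsets of $\mathcal B$. Note in particular that, in contrast with \cref{prop:XPhiXVanishes}, no relation between $I$ and $I_0$ is needed here.
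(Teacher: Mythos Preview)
Your argument is correct and rests on the same underlying idea as the paper's proof: kill the generators with index outside $\Sigma=\{-I_0+1,\dots,I_0\}$ and observe that what remains still conjugates $h$ to $hc$. The difference is packaging. You pass to the abelianization, use bilinearity of the commutator, and then invoke a direct-sum decomposition of $N'=D'_{0,I}/C$ along supports to see that $[\bar g_W,\bar h]=0$. The paper instead writes down a single group homomorphism $\pi\colon D/C\to H_{I,I_0}$ sending $a_i,b_i$ to themselves for $i\in\Sigma$ and to $1$ otherwise, checks in one line that the defining relations of $D/C$ are preserved, notes that $\pi$ is a retraction, and concludes $c=\pi(c)=\pi([g,h])=[\pi(g),h]$.

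The practical difference is exactly the point you flag at the end: your route needs the structure of $N'$ as free abelian on the described basis (so that ``support'' is well defined and the two spans really are complementary), whereas the paper's retraction bypasses this entirely --- verifying that $\pi$ respects $[[x,y],z]=1$ and $[a_i,b_j][b_i,a_j]=1$ is immediate regardless of how $N'$ is organised. Your decomposition $A=V\oplus W$ is precisely the abelianization of $\pi$, and your support computation in $N'$ is a by-hand verification that $\pi$ restricts to a retraction on the derived subgroup, something that comes for free once $\pi$ is a group homomorphism. So your approach is sound, but the retraction formulation is both shorter and avoids the one step you identified as non-formal.
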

	\begin{proof}
		Consider the map $\pi:\frac{D}{C}\rightarrow H_{I,I_0}$ such that $\pi(a_i)=a_i,\pi(b_i)=b_i,\forall i\in [-I_0+1,I_0]$ and such that $\pi(a_i)=\pi(b_i)=1$ otherwise. One checks that this map preserves the relations on $\frac{D}{C}$ and thus gives a well defined morphism. Furthermore, one checks that this map gives a retraction (with respect to the embedding of $H_{I,I_0}$ in $\frac{D}{C}$). We thus have $$c=\pi(c)=\pi([g,h])=[\pi(g),h].$$
	\end{proof}
	\begin{remark}\label{prop:CommutantLocallyIIndependent}
		In the above lemma, if $I,I'\geq2I_0$, then $H_{I,I_0}$ and $H_{I',I_0}$ are isomorphic by the unique isomorphism mapping $a_i$ to $a_i$ and $b_i$ to $b_i$ for all $i\in [-I_0+1,I_0]$.
	\end{remark}

		We proceed with the proof of \cref{prop:conjsep}

		Let $g_1,g_2\in G_d$ be non-conjugate. We will construct a finite quotient in which $g_1$ and $g_2$ remain non-conjugate. Notice that, as $\frac{G_d}{D}\cong\ZZ$ is conjugacy separable, we may assume that $g_1D$ and $g_2D$ are the same. We can thus find $n_t\in \ZZ$ and $h_1,h_2\in D$ such that $g_1=(h_1,n_t)$ and $g_2=(h_2,n_t)$. We demonstrate that there exists some $I$ such that $\pi_I(g_1)$ and $\pi_I(g_2)$ are non-conjugate. The result then follows as the groups $G_{d,I}$ are virtually nilpotent and thus conjugacy separable \cite{formanek1976conjugate}.
		
		We have a number of cases depending on which form $h_1,h_2$ and $n_t$ take.

		$\mathbf{1:\:g_1C\sim g_2C:}$\\
		
		After conjugating $g_2$, we may assume that $g_1C=g_2C$ or thus that $g_2g_1\inv\in C$. Furthermore as $g_1$ and $g_2$ are non-conjugate, we have that $g_2 g_1\inv\neq 1$.
		As $C$ is generated by $\{\cdots,c_{-2},c_{-1},c_1,c_2,\cdots\}$, there exists some $I$ such that $$g_2g_1\inv\in\langle c_{-I+1},\cdots,c_{-1},c_1,\cdots,c_I \rangle.$$
		This statement still holds after rounding up $I$ to some power of $2$.
		Consider then the quotient $\pi_0$ of $C$, obtained by the relations $c_i=c_{i+2I}$. If $2^{i}>I$, then $c_{2^{i}}$ gets identified with $c_0=1$. It follows that $\pi_0$ is injective on $$\langle c_{-I+1},\cdots,c_{-1},c_1,\cdots,c_I \rangle$$ and thus that $\pi_0(g_1\inv g_2)\neq 1$.
		
		Consider then $\pi_{2I}:G_d\rightarrow G_{d,2I}$. Suppose from contradiction that $\overline g_1 \sim \overline g_2$, then there exists some $g\in G_{d,2I}$ such that $g\overline g_1g\inv=\overline g_2$ or thus such that $[g,\overline g_1]=\overline g_2\overline g_1\inv $.
		As $g_2g_1\inv\in C$, \cref{prop:centAndCommutatorsDisjoint} implies that $[g,\overline g_1]$ must vanish. However $\pi_{2I}$ restricts to $\pi_0$ on $C$ and thus is $\overline g_2\overline g_1\inv$ non-trivial. By contradiction it must follow that $\overline g_1$ and $\overline g_2$ are non-conjugate.
		
		We are thus left with the case where $g_1C$ and $g_2C$ are non-conjugate.
		
		$\mathbf{2:\:g_1C\not\sim g_2C:}$\\
		 We distinguish further depending on the value of $n_t$
				
		$\mathbf{2.1:\:n_t\neq 0:}$\\
		Conjugating $g_2$ with itself clearly does not change anything, thus the conjugacy class of $g_2$ is given by $\bigcup_{i=0}^{\abs{n_t}-1}(g_2^{t^i})^D$. It thus suffices to find for each $i$ some quotient $\pi$ such that $\pi(g_1)\notin\pi(g_2^DC)$ where we replace $g_2$ with its conjugates $g_2^{t^i}$
		
		$\mathbf{2.1.1:\:g_1\notin g_2^DD':}$\\
		Let $I$ be some multiple of $n_t$ such that $$h_1D',h_2D'\in \langle a_{-I+1}D',b_{-I+1}D', \cdots,a_ID',b_ID' \rangle$$
		and let $K$ be the subgroup of $\frac{G_d}{D'}$ generated by $\{a_i\inv a_{i+2I},b_i\inv b_{i+2I}\mid i\in \ZZ\}$.
		Notice that $K$ is a subgroup of $\frac{[g_2,D]}{D'}$. Notice furthermore that $K$ is a normal subgroup of $\frac{G_d}{D'}$. We claim for $\pi:\frac{G_d}{D'}\rightarrow \frac{G_d}{D'K}$ that $\pi(g_1)\notin\pi(g_2^D)$. Indeed if $\pi(g_1)\in \pi(g_2^D)$, then there exist some $g\in d$ and some $k\in K$ such that $g_1kD'=g g_2 g\inv D'$ or thus such that $g_1g_2\inv kD'=[g,g_2]D'\in\frac{[g_2,D]}{D'}$. This in turn implies that there exists some $g'$ such that $g_1D'={g'} g_2{g'}\inv D'$. It follows that $\pi_I(g_1)\notin\pi_I(g_2^D)$. 
		
		$\mathbf{2.1.2:\:g_1\in g_2^DD':}$\\

		After conjugating with some element of $D$, we may assume that $g_1D'=g_2D'$.
		Let $I_0$ be some multiple of $n_t$ such that $$h_1,h_2\in \langle a_{-I_0+1},b_{-I_0+1}, \cdots,a_{I_0},b_{I_0} \rangle$$ and let $I=8I_0n_t$. We will show that $(h_1,n_t)^{D_I}$ and $(h_2,n_t)^{D_I}$ are distinct in $\frac{G_{d,I}}{C}$.
		Indeed suppose that $(p,0)$ is such that $(p,0)(h_1,n_t)(p,0)\inv C=(h_2,n_t)C$. Then at least ${\varphi_{n_t}(p)p\inv D'=h_1h_2\inv D'=D'.}$
		We thus have that $[p,h_1]p\varphi_{n_t}(p)\inv h_1C=h_2C$.
		Notice that $p\varphi_{n_t}(p)\in K_0\defeq\langle \{[x_i,y_j]\inv[x_{i+n_t}y_{j+n_t}]\mid i,j\in \ZZ,\:x,y\in\{a,b\}\}\rangle$. By \cref{prop:XPhiXVanishes}, we have $h_2h_1\inv\in K_0 C$, or thus when seen in $G_d$, we have that $g_2g_1\inv\in K_IK_0C$.
		Again using that $$g_2g_1\inv\in \langle a_{-I+1},b_{-I+1},\cdots,a_I,b_I \rangle,$$ we obtain that $g_2g_1\inv\in K_0C$ which thus implies that $g_2\in g_1^{D'}C$. From contraposition, the result follows.

		Each of the previous cases gives some $I_i$. For $I$, the least common multiple of these $I_i$, we find that $\pi_I(g_1)\nsim\pi_I(g_2)$.

		$\mathbf{2.2:\:n_t=0:}$\\
		Using the fact that $\frac{G_d}{D'}=\ZZ^2\wr\ZZ$ is conjugacy separable \cite{Remeslennikov1971Finite}, we may assume that $g_1D'=g_2D'$. 
		
		$\mathbf{2.2.1:\:g_1,g_2\notin D':}$\\
		
		For $g\in D\backslash D'$, let the support of $g$, $S(g)\subset \ZZ$ be the interval of minimal length such that ${gD'\in\langle \{a_iD',b_iD'\mid i\in S\}\rangle.}$
		Notice that $S(g)$ is non-empty and uniquely determined. Let $s(g)=\max(S(g))-\min(S(g))$ and suppose that $I>2s(g)+1$.
		Then the minimal interval $S_I(g)\subset \frac{\ZZ}{I\ZZ}$ such that $\pi_I(gD')\in\langle \{\pi_I(a_iD'),\pi_I(b_iD')\mid i\in S_I(g)\}\rangle$ is given by $S(g)\mod I$.
		Notice that for $h\in D$, we have $S(hgh\inv)=S(g)$. On the other hand for $j\neq 0$ we have $S(\varphi_{j}(g))\neq S(g)$ and for $j\neq I\ZZ$ that $S_I(\varphi_{j}(g))\neq S_I(g)$. But we still have that $s(g)=s(\varphi_j(g))$.
		
		Let now $I_0$ be minimal such that $$g_1,g_2g_1\inv\in H_{I_0} \defeq \langle 
		a_{-I_0+1},b_{-I_0+1},\cdots,a_{I_0},b_{I_0}
		\rangle, $$notice that $I_0\geq s(g_1)=s(g_2).$
		Let $I>2I_0+1$.
		Suppose now that ${\pi_I((h,n)(g_1,0)(h,n)\inv C)=\pi_I(g_2,0)C).}$ Then must ${S(\pi_I((h,n)(g_1,0)(h,n)\inv D'))=S(\pi_I((1,n)(g_1,0)(1,n)\inv D'))}$ be equal to $S((g_2,0)D')$. By the previous this only happens when $I\mid n$. As $(1,I)=(1,0)$ in $G_{d,I}$, we may thus assume that $n=0$. We thus have some $h\in D$ such that $\pi_I(hg_1h\inv C)=\pi_I(g_2C)$. By \cref{prop:InD2CommutantLocal}, we may assume that $h\in H_{I_0}$ and by \cref{prop:CommutantLocallyIIndependent} we thus have some $h\in G_{d}$ such that $hg_1h\inv C=g_2$.
		
		$\mathbf{2.2.2:\:g_1,g_2\in D':}$\\
		For $g\in D'\backslash C$ define $S(g)\subset \ZZ$ the minimal interval such that $gC\in \langle \{a_iC,b_iC\mid i\in S(g)\}\rangle$.
		Again $S(g)$ is non-empty. $S(g)$ is also uniquely determined: relators from $C$ may influence which commutators appear, but not the indices of the entries of those commutators. Again we define $s(g)=\max(S(g))-\min(S(g))$ and $S_I\subset \frac{\ZZ}{I\ZZ}$ as the minimal length interval such that ${\pi_I(gC)\in\langle \{\pi_I(a_iC),\pi_I(b_iC)\mid i\in S_I(g)\}\rangle.}$ Similar to before, if $I>2s(g)+1$ then $S_I(g)=S(g)\mod I$.
		Again conjugating with elements $h\in D'$ leaves $S(g)$ and $S_I(g)$ invariant, and conjugating with elements $(1,i)$ leaves $S$ invariant precisely when $i=0$ and $S_I$ invariant precisely when $I\mid i$.
		
		If $s(g_1)\neq s(g_2)$. Let $I>2\max(s(g_1),s(g_2))+1$, then $s_I(g_1)\neq s_I(g_2)$. As $s_I$ is invariant under conjugation in $\frac{G_{d,I}}{C}$, we have that $\pi_I(g_1C)$ and $\pi_I(g_2C)$ are non-conjugate.
		On the other hand is $s(g_1)=s(g_2)$, then after conjugating $g_1$ with some $(1,i)$ we may assume that $S(g_1)=S(g_2)$. For $I>2s(g_1)+1$, suppose now that $\pi_I((h,j)g_1(h,j)\inv)=\pi_I(g_2)$. As $S_I(g_1C)=S_I(g_2C)$ we have that $I\mid j$, and we may thus assume $j=0$. Furthermore as $D'$ centralises $D$, we thus have $\pi_I(g_1)=\pi_I(g_2)$. This is impossible as $\pi_I$ is injective on $\langle \{a_iC,b_iC\mid i\in S(g)\}\rangle$.
		
		Together, the above cases demonstrate \cref{prop:conjsep}
\section{The estimates}\label{sec:functions}

	Given the fact that $G_d$ is conjugacy separable, it is now not difficult to bound the conjugacy separability function from below.
	\begin{proposition}
		Suppose that $d$ is a prime valued, non-decreasing function, Let $S$ generate $G_d$, then for some constant $C>0$,
		$$\rf_{G_d,S}(n)\geq d(\lfloor\ln_2(Cn)\rfloor)$$
	\end{proposition}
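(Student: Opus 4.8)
The plan is to exhibit, for each $i \in \NN$, a specific short element of $G_d$ whose smallest non-trivializing finite quotient must have order at least $d(i)$, and to control its word norm. The natural candidate is the central element $c_{2^i}$, which is killed by the relation $c_{2^i}^{d(i)} = 1$ in $G_d$. First I would show that $c_{2^i}$ is non-trivial in $G_d$: since $N_d$ is generated by the $c_{2^j}^{d(j)}$ and these are independent central elements sitting inside $C$ (which, by \cref{prop:centAndCommutatorsDisjoint}, is a ``clean'' direct summand detecting no stray commutators), the image of $c_{2^i}$ in $G_d$ has order exactly $d(i)$, a prime. Next I would estimate $\norm{c_{2^i}}_S$. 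Writing $c_{2^i} = [a_0, b_{2^i}][b_0, a_{2^i}]$ and $a_{2^i} = t^{2^i} a_0 t^{-2^i}$, $b_{2^i} = t^{2^i} b_0 t^{-2^i}$, one builds $c_{2^i}$ from the generators $t, a_0, b_0$ using $O(2^i)$ letters; more precisely $\norm{c_{2^i}}_S \leq C' 2^i$ for a constant $C'$ depending only on $S$, and after passing to an arbitrary finite generating set the bound changes only by a multiplicative constant. Setting $n = C' 2^i$, or equivalently $i = \lfloor \ln_2(n/C') \rfloor = \lfloor \ln_2(Cn) \rfloor$ with $C = 1/C'$, gives an element of norm $\leq n$ whose image is non-trivial.

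It then remains to argue that any finite quotient $\pi : G_d \to Q$ with $\pi(c_{2^i}) \neq 1$ must have $\abs{Q} \geq d(i)$. Since $c_{2^i}$ has prime order $d(i)$ in $G_d$ and $\pi(c_{2^i}) \neq 1$, the element $\pi(c_{2^i})$ still has order $d(i)$ in $Q$ (a non-trivial quotient of $\ZZ/d(i)\ZZ$ with $d(i)$ prime is either trivial or all of it), so $Q$ contains an element of order $d(i)$ and hence $\abs{Q} \geq d(i)$ by Lagrange. Because this holds for \emph{every} such $\pi$, we get $\min\{\abs{Q} \mid \pi(c_{2^i}) \neq 1\} \geq d(i)$, and since $c_{2^i}$ has word norm at most $n = C' 2^i$, the maximum defining $\rf_{G_d,S}(n)$ is at least $d(i) = d(\lfloor \ln_2(Cn) \rfloor)$. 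Monotonicity of $d$ handles values of $n$ between consecutive powers of $2$: for general $n$, take the largest $i$ with $C' 2^i \leq n$, so that $i \geq \lfloor \ln_2(Cn) \rfloor$ and $\rf_{G_d,S}(n) \geq d(i) \geq d(\lfloor \ln_2(Cn)\rfloor)$.

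The main obstacle I anticipate is the lower bound $\norm{c_{2^i}}_S \geq \Omega(2^i)$ direction — but note that the proposition only needs the \emph{upper} bound on the norm (to guarantee the element is admissible for $\rf_{G_d,S}(n)$), so this is not actually required; the genuinely delicate point is instead verifying that $c_{2^i}$ has order exactly $d(i)$ in $G_d$ rather than collapsing further, i.e. that the subgroup $N_d = \langle c_{2^j}^{d(j)} : j \in \NN\rangle$ meets $\langle c_{2^i} \rangle$ in precisely $\langle c_{2^i}^{d(i)}\rangle$. This follows from the fact that $C$, as an abelian group, is free on the $c_k$ (the defining relations $[a_i,b_j][b_i,a_j] = c_{j-i}$ introduce no relations \emph{among} the $c$'s, only define them, and $\varphi$ fixes them), together with \cref{prop:centAndCommutatorsDisjoint} ensuring nothing outside $C$ interferes; hence $N_d$ is the ``coordinate-wise'' subgroup $\bigoplus_j d(j)\ZZ \cdot c_{2^j} \oplus \bigoplus_{k \notin \{2^j\}} \ZZ c_k$ inside $C = \bigoplus_k \ZZ c_k$, and the claim is immediate. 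I would state this as a short preliminary observation and then assemble the pieces as above.
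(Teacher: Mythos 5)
Your proposal is correct and follows essentially the same route as the paper: use the central element $c_{2^i}$, bound its word norm by $O(2^i)$ in the generating set $\{t,a_0,b_0\}$, note that it has prime order $d(i)$ in $G_d$ so any finite quotient not killing it has order at least $d(i)$, and conclude via monotonicity of $d$. The only difference is that you spell out why $c_{2^i}$ has order exactly $d(i)$ (the independence of the $c_k$ in $C$ and the coordinate-wise nature of $N_d$), a point the paper's proof asserts without elaboration; this is a welcome addition, not a divergence in method.
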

	\begin{proof}
		As varying a generating set only changes the word norm up to a constant, the above statement is independent of a choice of generating set.
		Let $S=\{t,a_0,b_0\}$.
		Notice that $c_{2^{i}}$ is of word norm at most $8+2^{i+3}<2^{i+4}$.
		Furthermore suppose that $\varphi:G_d\rightarrow Q$ is some finite quotient such that $\varphi(c_{2^{i}})$ is non-trivial. As $c_{2^{i}}$ has prime order $d(i)$ it follows that $\varphi(c_{2^{i}})$ also must have order $d(i)$ and thus must $\abs{Q}\geq d(i)$. Substituting $n=2^i$ we thus have that $\rf_{G_d}(4n)\succ d(\log_2(n))$ from which the statement follows.
	\end{proof}
	As the conjugacy separability function is always larger than the residual finiteness growth, we immediately get the following:
	\begin{corollary}\label{prop:ConjSepEffective}
		Suppose that $d$ is a prime valued, non-decreasing function, Let $S$ generate $G_d$, then for some constant $C>0$,
		$$\conj_{G_d,S}(n)\geq d(\lfloor\ln_2(Cn)\rfloor)$$
	\end{corollary}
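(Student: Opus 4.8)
The plan is to deduce this directly from the preceding proposition together with the elementary pointwise inequality $\conj_{G,S}(n)\geq\rf_{G,S}(n)$, valid for every finitely generated residually finite group $G$ with finite generating set $S$. Since $G_d$ is conjugacy separable by \cref{prop:conjsep}, it is in particular residually finite, so $\rf_{G_d,S}$ is well defined and finite and the previous proposition applies to it; hence the only real work is to record the comparison between the two growth functions.

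To establish $\conj_{G,S}(n)\geq\rf_{G,S}(n)$, the key observation is that an element $g\in G$ is conjugate to $1$ if and only if $g=1$, since conjugation fixes the identity. Thus for every $g\in G\without{1}$ we have $g\nsim 1$, and a finite quotient $\pi\colon G\to Q$ satisfies $\pi(g)\nsim\pi(1)$ precisely when $\pi(g)\neq 1$. In other words, the set of finite quotients witnessing non-conjugacy of the pair $(g,1)$ is exactly the set of finite quotients witnessing $\pi(g)\neq 1$, so the inner quantity $\min\{\abs Q\mid \pi(g)\nsim\pi(1)\}$ equals $\min\{\abs Q\mid \pi(g)\neq 1\}$. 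Because $\norm{1}_S=0\leq n$, the pair $(g,1)$ is admissible in the maximum defining $\conj_{G,S}(n)$ whenever $\norm{g}_S\leq n$; restricting that maximum to the pairs of this form recovers precisely $\rf_{G,S}(n)$, and therefore $\conj_{G,S}(n)\geq\rf_{G,S}(n)$.

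Applying this with $G=G_d$ and chaining it with the lower bound from the preceding proposition yields $\conj_{G_d,S}(n)\geq\rf_{G_d,S}(n)\geq d(\lfloor\ln_2(Cn)\rfloor)$, which is the claim; independence of $S$ up to a constant (absorbed into $C$) carries over verbatim from the previous proposition. There is essentially no obstacle in this argument: the single point deserving a word of care is the well-definedness and finiteness of $\rf_{G_d,S}$, which is guaranteed by the residual finiteness of $G_d$, itself a consequence of \cref{prop:conjsep}.
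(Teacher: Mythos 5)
Your proposal is correct and matches the paper's argument: the corollary is obtained by combining the preceding proposition's lower bound on $\rf_{G_d,S}$ with the standard inequality $\conj_{G,S}(n)\geq\rf_{G,S}(n)$, which you justify via the pairs $(g,1)$. Your write-up simply spells out this comparison in more detail than the paper does.
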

	\begin{definition}
		Let $d:\NN\rightarrow \NN$ be a non-decreasing function. We define conditions $\ref{cond:fastcompare}$ and $\ref{cond:fastcompute}$ on $d$ as follows:
		\begin{enumerate}[label=(\alph*)]
			\item It is possible to determine for $n,m\in\NN$ if $d(n)\geq m$ in time polynomial in $m$.\label{cond:fastcompare}
			\item It is possible to determine for $n\in\NN$ the value $d(n)$ in time polynomial in $d(n)$.\label{cond:fastcompute}
		\end{enumerate}
	\end{definition}
	\begin{lemma}\label{prop:LargeNiceExists}
		Let $d_0:\NN\rightarrow\NN$ be a computable function, then there exists a non-decreasing prime-valued function $d$ satisfying conditions \ref{cond:fastcompare} and \ref{cond:fastcompute} such that $d(n)\geq d_0(n)$ for all $n\in\NN$.
	\end{lemma}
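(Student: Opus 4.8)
The plan is to obtain $d$ by padding $d_0$ in two stages. First I enlarge the values of $d_0$ so that at $n$ they dominate not only $\max_{0\le k\le n}d_0(k)$ — which forces $d$ to be non-decreasing and $\geq d_0$ — but also the number of steps needed to compute $d_0(0),\dots,d_0(n)$, which is exactly what will make condition \ref{cond:fastcompute} automatic. Then I round the result up to the next prime. The one feature to arrange with care is that exceeding a time budget $m$ should itself \emph{certify} $d(n)\geq m$; that is what makes condition \ref{cond:fastcompare} automatic.

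Concretely, fix a Turing machine $M$ computing $d_0$, let $T(n)$ be the maximum number of steps $M$ takes on an input $k$ with $0\le k\le n$, set
$$
e(n)=\max\{2,n,T(n),d_0(0),d_0(1),\dots,d_0(n)\},
$$
and let $d(n)$ be the least prime with $d(n)\geq e(n)$. Each entry of this maximum is non-decreasing in $n$, so $e$, and hence $d$, are non-decreasing; moreover $d(n)\geq e(n)\geq\max\{2,d_0(n)\}$, so $d$ is prime-valued and dominates $d_0$, and Bertrand's postulate gives $e(n)\leq d(n)<2e(n)$.

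For condition \ref{cond:fastcompute}: running $M$ on the inputs $0,1,\dots,n$ with a step counter yields all the values $d_0(k)$ and the number $T(n)$, and a pass of maxima then gives $e(n)$. All integers here have $O(T(n))$ bits and the total work is at most $(n+1)T(n)$ plus lower-order bookkeeping, i.e.\ $\mathrm{poly}(e(n))\le\mathrm{poly}(d(n))$. Then $d(n)$ is found by testing $e(n),e(n)+1,\dots$ for primality by trial division; by Bertrand there are at most $e(n)$ candidates, each below $2e(n)$, so the search also costs $\mathrm{poly}(e(n))\le\mathrm{poly}(d(n))$.

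Condition \ref{cond:fastcompare} is the delicate point, since computing $d(n)$ outright costs $\mathrm{poly}(d(n))$ time, which can far exceed $\mathrm{poly}(m)$. On input $(n,m)$ I would first answer ``yes'' in the trivial cases $m\le2$ (as $d(n)\geq2$) and $n\geq m$ (as $d(n)\geq n$), noting that $n\geq m$ is decidable after reading only $O(\log m)$ bits of $n$. Otherwise $n<m$, and I simulate $M$ on each input $k=0,1,\dots,n$ with the step counter capped at $m$. If some run reaches the cap then $M$ makes $\geq m$ steps on that $k$, so $T(n)\geq m$, hence $e(n)\geq m$ and $d(n)\geq m$, and I answer ``yes''; the simulated tape stays below length $2m$, so reaching the cap costs $\mathrm{poly}(m)$, and there are at most $n+1\le m$ runs. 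If instead all runs terminate, then $T(n)<m$ and I have all of $d_0(0),\dots,d_0(n)$ in hand, hence $e(n)$ exactly; if $e(n)\geq m$ I answer ``yes'', while if $e(n)<m$ then $d(n)$ is the least prime above $e(n)$, lies below $2e(n)<2m$, is found by trial division over fewer than $m$ candidates in $\mathrm{poly}(m)$ time, and is then compared with $m$. Every branch runs in $\mathrm{poly}(m)$ time (using $n<m$), which gives \ref{cond:fastcompare}. The main obstacle is precisely this verification: the naive ``compute $d(n)$, then compare'' is too slow, and the fix rests on the term $T(n)$ sitting inside $e(n)$ — so running out of budget is a certificate that $e(n)$, hence $d(n)$, is large — together with the factor-$2$ gap between $e(n)$ and the next prime, which keeps the residual prime search within budget.
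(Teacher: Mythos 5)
Your proof is correct and follows essentially the same route as the paper: pad $d_0$ by the time needed to compute its initial values (so that a timed-out simulation certifies the value is large, giving \ref{cond:fastcompare}, and computing the value is polynomial in the value, giving \ref{cond:fastcompute}), then round up to the next prime with a naive search. Your version is somewhat more explicit than the paper's (taking the running maximum instead of assuming $d_0$ non-decreasing, including $n$ in the padding, and invoking Bertrand's postulate for the prime-gap bound), but the underlying idea is identical.
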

	\begin{proof}
		We may assume that $d_0$ is non-decreasing.
		First we show there exists some non-decreasing (not necessarily prime-valued) function $d_1$ satisfying \ref{cond:fastcompare} and \ref{cond:fastcompute} such that $d_1(n)\geq d_0(n)$. Indeed let $\mc A$ be an algorithm computing $d_0$, let then $d_1(n)$ be given by $d_0(n)$ plus the number of steps needed to sequentially compute all the values $d_0(1), d_0(2),\cdots, d_0(n)$ using $\mc A$. Notice that the number of steps it takes to compute $d_0(i)$, can be computed in $2(d_0(i))$ steps by alternating a step from $\mc A$, and incrementing some counter. 
		One checks that $d_1$ is larger than $d_0$ and that it satisfies \ref{cond:fastcompute}. To demonstrate \ref{cond:fastcompare}, compute $d_0(i)$ as above, if the counter reaches $m$, then $d_0(i)$ must be at least $m$, and then this happens after $2m$ steps. otherwise the algorithm will halt in less than $2m$ steps.
		
		Let $d(n)$ now be the least prime number larger than $d_1(n)$.
		To compute $d(n)$, first compute $d_1(n)$, this can be done in at most $Cd_1(n)$ steps. Then for every integer $i$ larger than $d_1(n)$, check if $i$ is prime. Checking if $i$ is prime can be done naively in $Ci$ steps. The time to compute $d(n)$ is then up to constant factors given by $$
		d_1(n)+\sum_{i=d_1(n)}^{d(n)}i
		$$
		which can (again up to a factor) be bounded above by $d(n)^2$. This shows that \ref{cond:fastcompute} is satisfied.
		
		For \ref{cond:fastcompare}, if $m\leq d_1(n)$ then this can be checked in polynomial time as $d_1(n)$ itself satisfies \ref{cond:fastcompare}. If $d_1(n)\leq m\leq d(n)$, then this can be done by terminating the above computation after checking that all $i\in [d_1(n)+1,m]$ are not prime. Finally if a prime is found in $[d_1(n)+1,m]$, then this prime is found in no more than $Cd(n)^2\leq Cm^2$ steps.
	\end{proof}
	To solve the conjugacy problem on $G_d$, we first solve it on $\frac{G_d}{C}$:
	\begin{lemma}\label{prop:redconjprob}
		The group $\frac{G_d}{C}$ has conjugacy problem which is solvable in polynomial time. Furthermore if $g_1$ and $g_2$ are conjugate then one can find a conjugator $g$ such that $g\inv g_1g=g_2$ in polynomial time. This conjugator has word length at most polynomial in the lengths $\norm{g_1},\norm{g_2}$.
	\end{lemma}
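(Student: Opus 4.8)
We first record that $G_d/C=G_0/C$ for every $d$, since the relators $c_{2^i}^{d(i)}$ defining $N_d$ already lie in $C$; hence it suffices to treat $\Gamma\defeq G_0/C$, and in fact the conjugacy problem here does not depend on $d$. Because $C\subseteq D_0'$, the group $\Gamma$ fits into a short exact sequence
$$1\longrightarrow A\longrightarrow \Gamma\longrightarrow \ZZ^2\wr\ZZ\longrightarrow 1,$$
where $A=D_0'/C$ is free abelian (as noted in the proof of \cref{prop:centAndCommutatorsDisjoint}, $D_0'$ splits as $C$ plus a free abelian complement) and carries the shift action of $t$ via $\varphi_1$. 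The plan is to decide conjugacy in $\Gamma$ by lifting through this extension, doing all computations inside ``windows'' — spanned by the $a_i,b_i$ with $|i|\le N$ together with their commutators — for $N$ polynomial in the input size.

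Given $g_1,g_2$ as words in $t,a_0,b_0$, one first collects the occurrences of $t$, obtaining in polynomial time normal forms $g_i=(h_i,n_i)$ with $h_i$ a word in the $a_j,b_j$ and $n_i$ the $t$-exponent sum; conjugate elements satisfy $n_1=n_2=:n$. Projecting to $G_0/D_0'\cong\ZZ^2\wr\ZZ$, conjugacy is decidable \cite{Remeslennikov1971Finite}; I will check that the standard wreath-product criterion runs in polynomial time and, when the images are conjugate, returns a conjugator of polynomial word length. If the images are non-conjugate we answer ``not conjugate''. Otherwise, fixing such a conjugator $\bar u$, lifting it to $u\in\Gamma$ and replacing $g_1$ by $ug_1u^{-1}$ (still of polynomial length), we may assume $g_2g_1^{-1}\in A$; then every conjugator $g$ of $g_1$ to $g_2$ has image $\bar g$ in the centraliser $Z\defeq C_{\ZZ^2\wr\ZZ}(\bar g_1)$.

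It remains to decide, among the $g$ with $\bar g\in Z$, whether $[g,g_1]=g_2g_1^{-1}$. Since $A$ is central in $D_0/C$ and $\Gamma$ acts on $A$ only through the quotient $G_0/D_0\cong\ZZ$ (via $\varphi$), the assignment $g\mapsto[g,g_1]$ is a crossed homomorphism from $\{g:\bar g\in Z\}$ to $A$, computable from the defining data. Concretely, writing $g=\tilde g\,(a,0)$ with $\tilde g$ a fixed lift of an element $(b,m)$ of $Z$ and $a\in A$, the equation $[g,g_1]=g_2g_1^{-1}$ unravels into (i) a divisibility condition over $\ZZ[t^{\pm1}]$ describing which $(b,m)$ lie in $Z$ — this constrains $m$ to finitely many residues modulo $n$ when $n\neq0$ — and (ii) the membership $\varphi_{-m}\big(g_2g_1^{-1}-[\tilde g,g_1]\big)\in(1-\varphi_n)A$. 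Restricting $A$, $Z$ and the lifts to a polynomial-size window makes both (i) and (ii) finite systems of linear equations over $\ZZ$, so the problem is decidable and a solution (hence a conjugator, which we finally multiply by $\bar u$) can be extracted; bounds on solutions of integer linear systems keep its word length polynomial in $\norm{g_1}+\norm{g_2}$. When $n=0$, steps one and two are vacuous: here $g_1,g_2\in D_0/C$ is $2$-step nilpotent, $Z$ is the whole base group, and $g_1\sim g_2$ iff $g_2g_1^{-1}\in[g_1,D_0/C]$, once more a windowed linear-algebra computation.

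The main obstacle is the bookkeeping in the fibre step. One must show that cutting $A$ and $Z$ down to a window of polynomial size loses no solutions — i.e. that cancellation with ``far-away'' generators cannot place $g_2g_1^{-1}$ in the image of the crossed homomorphism without it already lying in the windowed image — and that the $\ZZ[t^{\pm1}]$-divisibility tests and the linear systems they produce all have size polynomial in $\norm{g_1}+\norm{g_2}$. The most delicate point is bounding the $t$-component $m$ of the conjugator when $n\neq0$, where $Z$ need not be finitely generated as an abstract group; I expect this bound to come from the supports of $h_1$ and $h_2$, in the spirit of the support arguments in \cref{sec:separability} and of \cref{prop:XPhiXVanishes}.
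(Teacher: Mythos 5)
Your overall strategy (push the problem down the tower $G_d/C\to\ZZ^2\wr\ZZ\to\ZZ$, then treat the abelian fibre $A=D'/C$ by linear algebra over a bounded window) is broadly parallel to the paper's proof, which also first settles the $t$-exponent and the $D/D'$-level and then lifts through $D'/C$. But as written the proposal defers exactly the claims that constitute the lemma. You assert, without argument, that the wreath-product conjugacy criterion runs in polynomial time and returns conjugators of polynomial word length; you state as an open task that restricting $A$ and the centraliser $Z$ to a polynomial-size window loses no solutions; and you explicitly flag, without resolving, the bound on the $t$-component $m$ of the conjugator when $n\neq0$, offering only the expectation that it follows from support considerations. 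Since the statement to be proved is precisely ``polynomial time'' and ``polynomially short conjugator'', these are genuine gaps, not bookkeeping. The last one in particular has an elementary resolution that the paper uses and that your support heuristic obscures: conjugators may be modified by powers of $g_1$ (whose $t$-part is $n_t$), so only $m\bmod n_t$ matters and one checks the at most $|n_t|\leq n$ cases $g_1^{t^i}\in g_2^D$; no support argument is needed. For the windowing and length bounds, the paper does not argue abstractly but writes the candidate conjugator's coordinates explicitly (telescoping sums $\sum_j c_{a_{i-jn_t}}$ when $n_t\neq0$; Malcev-completion coordinates with an enumeration of the $a_k$-coefficient over $[0,|c_{a_k}|]$ when $n_t=0$), which yields the polynomial time and length bounds simultaneously — your plan would need an equivariant analogue of the retraction of \cref{prop:InD2CommutantLocal} compatible with $1-\varphi_n$, which is not automatic.

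One further inaccuracy: in the $n=0$ case you claim the quotient steps are vacuous and that $Z$ is the whole base group, so that $g_1\sim g_2$ iff $g_2g_1^{-1}\in[g_1,D_0/C]$. This misses conjugators with nontrivial $t$-part: it is false when $\bar g_1=1$ in $\ZZ^2\wr\ZZ$ (i.e.\ $g_1\in D'C/C$), where conjugation acts by the shifts $\varphi_m$ and these must be tested; and even when $\bar g_1\neq 1$ you still have to determine the unique admissible shift before reducing to the nilpotent computation, which the paper does via the least index with nontrivial contribution to $h_iD'$. So the skeleton is sound and close in spirit to the paper, but the proof of the complexity and length bounds — the actual content of the lemma — is missing.
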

	For the above it is useful to first construct ``fractional'' conjugators:
	The group $\frac{D_{d,0}}{C}$ is torsion free nilpotent, and thus has a Malcev completion $\QQ \frac{D}{C}$, that is a torsion-free nilpotent group $\QQ \frac{D}{C}$ in which $\frac{D}{C}$ embeds such that:
	\begin{itemize}
		\item for any $g\in \QQ \frac{D}{C}$, and any $n\in \ZZ_{\neq 0}$, there exists a unique $h\in \QQ \frac{D}{C}$ such that $h^n=g$\\
		\item for any $g\in \QQ \frac{D}{C}$ there exists some $n\in\ZZ_{\neq 0}$ such that $g^n\in \frac{D}{C}$.
	\end{itemize} 
	By \cite{segal1983polycyclic}, this group exists and is $2$-step nilpotent.

	\begin{proof}[Proof of \cref{prop:redconjprob}]
		Fix the generating set $S=\{t,a_0,b_0\}$ and suppose $\norm{g_1}_S,\norm{g_2}_S\leq n$.
		One checks in linear time whether $g_1D$ and $g_2D$ are equal. If they are not, then are $g_1$ and $g_2$ also non-conjugate, so we may assume $g_1D=g_2D\defeq n_t$. We distinguish $2$ cases.
		First suppose $n_t=0$. Let $h_1,h_2$ be such that $(h_i,n_t)=g_i$. Notice that $h_i$ can be expressed as a word in the letters $S_n\defeq\{a_i,b_i\mid \abs{i}\leq n\}$. This conversion can be done in linear time.
		
		One can in at most quadratic time determine the least $i_1$ such that $a_{i_1}$ or $b_{i_1}$ has non-trivial contribution in $h_1D'$. Similarly one determines $i_2$. 
		Notice that conjugating with elements of $D$ leaves $i_j$ invariant and conjugating with $t$ increments $i_j$ by one. In particular if ${(h,n)g_1(h,n)\inv =g_2}$ for some $(h,n)$, then must $n$ be equal to $i_2-i_1$. In linear time we may thus replace $g_1$ with $(1,i_2-i_1)g_1(1,i_2-i_1)\inv$ (just by relabelling the indices) this does not change the word norm of $g_1$ (with respect to $S_{3n}$). In the remainder we thus assume that $i_2=i_1$. We may check in at most quadratic time whether $h_1D'=h_2D'$. If this is not the case then we obtain that $g_1$ and $g_2$ are non-conjugate. Now to determine if some $h$ exists such that $hh_1h\inv=h_2$, one needs to find existence of a solution of a system of (up to a constant factor) $2n^2$ linear equations in $2n$ variables, where the entries take values at most $n^2$. This can be done in polynomial time in $n$.
		
		In the $n_t=0$ case, we are left with constructing a small conjugator. We first construct a conjugator in the Malcev completion $\QQ \frac{D}{C}$. Notice that for $q$ any rational number that if $h$ is a conjugator, then so is $hh_1^q$.
		
		Pick some $k$ and $l$ such that the contributions of $a_k$ and $b_l$ to $h_1D'$ are non-trivial and, if possible, such that $k=l$. By the previous observation, we may fix the $a_k$ coordinate of $h$ to a value of choice. Here we pick $a_k^0$. If $k=l$, then this uniquely determines the $b_k$ coordinate of $hD'$, namely $\frac{c_{[a_k,b_k]}}{c_{a_k}}$, where $c_{a_k}$ is the $a_k$ coordinate of $h_1$ and $c_{[a_k,b_k]}$ is the $[a_k,b_k]$ coordinate of $h_2h_1\inv$. On the other hand, suppose that $k\neq l$, then $c_{a_l}$ and $c_{b_k}$ must be trivial. In particular it follows that any $h$, the $[a_k,b_l]$ coordinate of $[h_1,h]$ is completely determined by the $a_k$ and $b_l$ coordinate of $h$. we obtain that the $b_l$ coordinate of $h$ is given by $\frac{c_{[a_k,b_l]}}{c_{a_k}}$. 
		
		Now we may in similar fashion to the first case, determine all $a_i$ coordinates of $h$ as $\frac{c_{[a_k,a_i]}}{c_{a_k}}$ and all $b_i$ coordinates of $h$ as $\frac{c_{[b_l,b_i]}c_{a_k}-c_{b_i}c_{[a_k,b_l]}}{c_{b_l}}$.

		If some conjugator exists, then we may multiply it with some power of $h_1$, and thus assume that its $a_k$ coefficient is an integer in the interval $[0,\abs{c_{a_k}}]$
		
		We can thus find this conjugator by iterating over all $i\in [0,c_{a_k}]$ and checking if $h'=h h_1^{\frac{i}{c_{a_k}}}D'$ belongs to $\frac{D}{C}$. Finding such an $i$ takes at most cubic time in $n$. One checks that all coordinates of $h'$ are at most cubic in $n$. The word norm $\norm{h'}_{S_n}$ is thus at most quartic in $n$ or thus is the word norm $\norm{(h',0)}_S$ at most quintic in $n$.
		
		Now assume $n_t\neq 0$, then $\abs{g_1D}$ is bounded above by $n$.

		Similar to the proof of \cref{prop:conjsep}, we may split in $n$ distinct cases in which we check whether $g_1^{t^{i}}\in g_2^{D}$ for $i\in[0,\abs{n_t}-1]$.
		For each $i$, finding $h\in \frac{D}{D'}$ such that $(h,0)g_1^{t^{i}}(h,0)\inv D'=g_2$ is again equivalent to solving a linear system. If such an $h$ exists then its $a_i$ coordinate must be given by $\sum_{j=0}^\infty c_{a_{i-jn_t}}$ where $c_{a_k}$ is the $a_k$ coordinate of $h_2h_1\inv$\footnote{Notice that this sum is finite as $h_2h_1\inv$ is finitely generated} . 
		Similarly the $b_i$ coordinate must be given by $\sum_{j=0}^\infty c_{b_{i-jn_t}}$. If the conjugator exists, then can $h$ only take non trivial values on $\{a_i,b_i\mid i\in [-n,n]\}$. It also follows that $h$ is of word norm at most cubic in $n$. By similar arguments, one finds (existence of) $h'\in D'$ such that $h'h(h_1,n_t)h\inv {h'}\inv=(h_2,n_t)$ where $h'$ is of length at most polynomial in $n$.
		These computations can all be carried out in polynomial time.

	\end{proof}
	\begin{proposition}\label{prop:conjprop}
		Let $d$ be a non-decreasing function satisfying \ref{cond:fastcompare} and \ref{cond:fastcompute}, then the conjugacy problem on $G_d$ is solvable in polynomial time.
	\end{proposition}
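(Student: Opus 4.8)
The plan is to reduce the conjugacy problem in $G_d$ to the conjugacy problem in $\frac{G_d}{C}$, which \cref{prop:redconjprob} already solves in polynomial time, together with one restricted instance of the word problem in $G_d$; the latter will turn out to be polynomial-time precisely because of conditions \ref{cond:fastcompare} and \ref{cond:fastcompute}. Write $\pi\colon G_d\to\frac{G_d}{C}$ for the projection and suppose $g_1,g_2$ have word length at most $n$. First apply \cref{prop:redconjprob} to $\pi(g_1)$ and $\pi(g_2)$. Either they are non-conjugate in $\frac{G_d}{C}$, in which case $g_1,g_2$ are non-conjugate in $G_d$ and we stop, or we obtain in polynomial time an element $g\in G_d$ (the produced conjugator, read as a word in $t,a_0,b_0$) of word length polynomial in $n$ with $\pi(g)\inv\pi(g_1)\pi(g)=\pi(g_2)$.

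The crucial point is that this one candidate decides the matter exactly: I claim $g_1\sim_{G_d}g_2$ if and only if $g\inv g_1g=g_2$ holds in $G_d$. One direction is immediate. For the other, suppose $h\inv g_1h=g_2$ in $G_d$ for some $h$; then $\pi(gh\inv)$ centralises $\pi(g_1)$, so $[g_1,gh\inv]\in C$ in $G_d$, and \cref{prop:centAndCommutatorsDisjoint} forces $[g_1,gh\inv]=1$. Hence $gh\inv$ centralises $g_1$ in $G_d$, and writing $g=(gh\inv)h$ gives $g\inv g_1g=h\inv g_1h=g_2$. (Conceptually this says that the only central ``correction'' $c\in C$ with $g_2c\sim g_2$ is $c=1$, so the ambiguity in the choice of conjugator modulo $C$ is invisible.) It therefore remains only to decide, in polynomial time, whether the word $w:=g_2\inv g\inv g_1g$, which has length polynomial in $n$, is trivial in $G_d$.

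By the choice of $g$ we have $\pi(w)=1$, so reading $w$ in $G_0$ yields an element of the free abelian group $C$ (free on $c_1,c_2,\dots$). A collection process in $D_0$ — entirely parallel to the coordinate computations in the proof of \cref{prop:redconjprob} — produces in polynomial time an expression $w=\prod_{k\ge1}c_k^{m_k}$ in which only polynomially many indices $k$, all of polynomial size, occur and each $|m_k|$ is polynomially bounded in $n$. Now $w=1$ in $G_d$ iff $\prod c_k^{m_k}\in N_d$, i.e. iff $m_k=0$ for every $k$ that is not a power of $2$ and $d(i)\mid m_{2^i}$ for every $i$ with $2^i$ among the occurring indices; since those indices are of polynomial size, only $O(\log n)$ divisibility conditions arise. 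Each is tested in polynomial time: if $m_{2^i}\neq0$ we first use \ref{cond:fastcompare} to check, in time polynomial in $|m_{2^i}|$, whether $d(i)\ge|m_{2^i}|+1$ — if so then $0<|m_{2^i}|<d(i)$, hence $d(i)\nmid m_{2^i}$ and $w\neq1$; otherwise $d(i)\le|m_{2^i}|$ and \ref{cond:fastcompute} computes $d(i)$ in time polynomial in $|m_{2^i}|$, after which $d(i)\mid m_{2^i}$ is tested directly. Assembling the steps gives a polynomial-time algorithm.

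The main obstacle is conceptual rather than computational: it is the observation, via \cref{prop:centAndCommutatorsDisjoint}, that a single lift of the conjugator supplied by \cref{prop:redconjprob} already settles conjugacy in $G_d$, so that no search over the (infinite) centraliser-coset of mod-$C$ conjugators is needed. After that the remaining ingredients are routine: the polynomial bounds on the coordinates $m_k$, which mirror the coordinate estimates in the proof of \cref{prop:redconjprob}, and the bookkeeping showing that \ref{cond:fastcompare} and \ref{cond:fastcompute} are exactly what is required to decide the finitely many divisibilities $d(i)\mid m_{2^i}$ in polynomial time.
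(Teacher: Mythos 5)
Your proposal is correct and follows essentially the same route as the paper: reduce to $\frac{G_d}{C}$ via \cref{prop:redconjprob}, use \cref{prop:centAndCommutatorsDisjoint} to see that the single lifted conjugator already decides conjugacy in $G_d$, and then test triviality of the resulting central element coordinatewise, with conditions \ref{cond:fastcompare} and \ref{cond:fastcompute} handling the divisibility checks at the indices $2^i$. The only differences are cosmetic (you phrase the key step via the centraliser of $g_1$ rather than the commutator $[g',g_2]$, and you spell out the membership test in $N_d$ slightly more explicitly).
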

	\begin{proof}
		Let $g_1,g_2\in G_d$. Suppose that $g_1C$ and $g_2C$ are non-conjugate, then by \cref{prop:redconjprob}, this can be determined in at most polynomial time. Suppose $g_1C$ and $g_2C$ are conjugate, then we can find $g$ of length polynomial in $n$ such that $g\inv g_1gC=g_2C$. We claim now that $g_1\sim g_2$ if and only if $gg_1g\inv=g_2$. Indeed suppose that $g'$ is such that $g g_1 g\inv={g'} g_2{g'}\inv$. Then $g g_1 g\inv g_2\inv=[g',g_2]$. The former belongs to $C$ so by \cref{prop:centAndCommutatorsDisjoint}, it follows that $ g\inv g_1 gg_2\inv=[g',g_2]=e$.
		
		To determine conjugacy of $g_1$ and $g_2$, it suffices thus to determine if $g_2\inv g\inv g_1 g$ is trivial. This element can be expressed as $\sum_{i\in I}\gamma_ic_i$ where $I=[-n,n]$ and where the coefficients $\gamma_i$ are at most polynomial in $n$. For each $i\in I$ we may now check if $\gamma_i$ is trivial, or, when $i=2^j$ , if $\gamma_{i}$ is a multiple of $d(j)$. The last can be done by checking if $d(j)\leq \gamma_i$ and in that case determining $d(j)$ and checking if $d(j)$ divides $\gamma_i$ directly.
		If this is the case for all choices of $i$, then are $gg_1g\inv$ and $g_2$ equal and thus conjugate.
	\end{proof}
	\begin{lemma}\label{prop:checkIfQuotient}
		Let $d$ be a non-decreasing prime-valued function satisfying \ref{cond:fastcompare} and \ref{cond:fastcompute}.
		Given a finite group $Q$ with $3$ distinguished elements $\alpha,\beta,\tau$, then one checks whether there exists a group morphism $G_d\rightarrow Q$ mapping $a_0,b_0,t$ to $\alpha,\beta,\tau$ respectively. This can be done in time polynomial in the order of $Q$.
	\end{lemma}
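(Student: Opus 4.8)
The plan is to reduce the existence of such a morphism to checking finitely many relations in $Q$, each at cost polynomial in $|Q|$. Since $\tau$ has finite order $m:=\mathrm{ord}(\tau)\le|Q|$, any morphism $\psi\colon G_d\to Q$ with $\psi(t)=\tau$ must send $t^m$ to $1$, hence kills its whole normal closure and factors through the quotient $G_{d,m}$ of $G_d$; conversely every morphism $G_{d,m}\to Q$ lifts. So it suffices to decide whether $\alpha,\beta,\tau$ satisfy the defining relations of $G_{d,m}$. First I would compute $m$, and then the $2m$ elements $\alpha_i:=\tau^i\alpha\tau^{-i}$, $\beta_i:=\tau^i\beta\tau^{-i}$ for $i\in\ZZ/m\ZZ$, each being a product of at most $m+1$ elements of $Q$; this takes time polynomial in $|Q|$.

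Next I would verify the ``finite part'' of the presentation. The relations $ta_it^{-1}=a_{i+1}$, $tb_it^{-1}=b_{i+1}$ and $t^m=1$ hold automatically by construction of the $\alpha_i,\beta_i$. What remains is: (a) $[[x,y],z]=1$ for all $x,y,z\in\{\alpha_i,\beta_i\mid i\in\ZZ/m\ZZ\}$, which is $(2m)^3$ identities; and (b) that $\delta_{i,j}:=[\alpha_i,\beta_j][\beta_i,\alpha_j]$ depends only on $j-i\bmod m$ — equivalently, since $\delta_{i+1,j+1}=\tau\delta_{i,j}\tau^{-1}$ holds automatically, that each $\delta_{i,j}$ commutes with $\tau$ — which is $O(m^2)$ identities and is exactly the requirement that $c_{j-i}\mapsto\delta_{i,j}=:\gamma_{j-i}$ be well defined. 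Passing (a)--(b) is equivalent to having a morphism $G_{0,m}\to Q$ sending $a_0,b_0,t$ to $\alpha,\beta,\tau$.

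It then remains to check the torsion relations $c_{2^i}^{d(i)}=1$ of $G_{d,m}$, i.e.\ $\gamma_{2^i\bmod m}^{d(i)}=1$ for all $i\in\NN$, and this is where the work lies, since a priori there are infinitely many. The key structural fact is that $(2^i\bmod m)_{i\ge0}$ is eventually periodic, with pre-period at most $\log_2 m$ and period at most $m$, all of which one computes in time polynomial in $|Q|$. For the finitely many indices $i$ before the period begins, the relation $\gamma_{2^i\bmod m}^{d(i)}=1$ is vacuous when $\gamma_{2^i\bmod m}=1$; otherwise, as $d(i)$ is prime and $\mathrm{ord}(\gamma_{2^i\bmod m})$ lies between $2$ and $|Q|$, it is equivalent to $d(i)=\mathrm{ord}(\gamma_{2^i\bmod m})$, and this single equality I would decide in time polynomial in $|Q|$ by first comparing $d(i)$ with that order and with that order plus one using condition \ref{cond:fastcompare}, and, in the only remaining case, computing the then-small value $d(i)$ using condition \ref{cond:fastcompute}.

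The main obstacle is the infinitely many relations indexed by $i$ inside the period. If none of the periodic residues $r$ satisfies $\gamma_r\neq 1$, all these relations are vacuous and the check is complete. Otherwise, along the infinite arithmetic progression of indices hitting such a residue $r$ the left-hand side has the fixed order $\mathrm{ord}(\gamma_r)$, so a morphism can exist only if $\mathrm{ord}(\gamma_r)$ is prime, all such periodic residues share a common $\gamma$-order $L_0\le|Q|$, and $d(i)=L_0$ for every index $i$ in these progressions; since $d$ is non-decreasing the last requirement amounts to $d$ being eventually constant equal to $L_0$. The first two conditions are checked directly. For the last, I would use condition \ref{cond:fastcompare}: testing at indices of size polynomial in $|Q|$ whether $d$ already exceeds $|Q|$, together with monotonicity of $d$, reduces the infinite family to a finite list of equalities $d(i)=L_0$ that are then checked as above. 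Combining all of the above, the full procedure runs in time polynomial in $|Q|$; the delicate point — and the one I would write out most carefully — is exactly this reduction of the infinite family of torsion relations to finitely many effective checks via the non-decreasing property and conditions \ref{cond:fastcompare} and \ref{cond:fastcompute}.
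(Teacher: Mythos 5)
Your overall route is the same as the paper's: reduce to a morphism out of $G_{d,m}$ with $m=\mathrm{ord}(\tau)$, compute the $\alpha_i,\beta_i,\gamma_i$, check the class-$2$ relations and the well-definedness of $\gamma$ via commutation with $\tau$, and then treat the torsion relations through the eventual periodicity of $2^i \bmod m$; all of that part is fine. The gap is exactly at the step you yourself flag as delicate. If some periodic residue $r$ has $\gamma_r\neq 1$, existence of the morphism requires $d(i)=L_0:=\mathrm{ord}(\gamma_r)$ for every $i$ in an unbounded set of indices, which by monotonicity is equivalent to $d$ being constant equal to $L_0$ from some index onwards \emph{forever}. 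This is a tail property of $d$, and no finite collection of evaluations or comparisons of $d$ --- in particular not ``testing at indices of size polynomial in $|Q|$ whether $d$ already exceeds $|Q|$'' --- can certify it: $d$ could equal $L_0$ at every index your procedure ever queries and still jump above $L_0$ at some astronomically larger index, in which case no morphism exists although your algorithm accepts. Conditions \ref{cond:fastcompare} and \ref{cond:fastcompute} give fast access to individual values of $d$; they do not make the universally quantified statement ``$d(i)=L_0$ for all sufficiently large $i$'' decidable by finitely many queries. So, as written, your procedure can return a wrong yes-answer, and the claimed reduction ``to a finite list of equalities'' does not hold.

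For comparison, the paper avoids ever needing tail information about $d$: it observes (implicitly) that a residue of $\ZZ/m\ZZ$ hit by at least two powers of $2$ is hit by infinitely many, and for such residues it simply demands $\gamma_j=1$, checking $\gamma_j^{d(i)}=1$ only for the residues hit by a unique power of $2$. Demanding $\gamma_j=1$ is the correct criterion whenever $d$ takes at least two (necessarily distinct prime) values on the corresponding infinite index set --- in particular whenever $d$ is unbounded, as are the functions produced by \cref{prop:LargeNiceExists} and used in the main theorem. Alternatively, one can close your case honestly by noting that $d$ is a \emph{fixed} non-decreasing integer-valued function, hence either unbounded or eventually constant; which of the two holds, and in the latter case the limit value and the index from which it is attained, is a finite amount of data that may be hard-coded into the algorithm, after which your analysis does reduce to finitely many checks of the kind you describe and runs in time polynomial in $|Q|$. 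Some such ingredient is needed; querying $d$ alone cannot do it.
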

	\begin{proof}
		Let $n_t$ be the order of $\tau$.
		For $i\leq n_t$, define $\alpha_i,\beta_i$ and $\gamma_i$ as the image of $a_i$, $b_i$ and $c_i$ respectively. These can be computed using at most $2i$, $2i$ and $8i+7$ multiplications each. 
		 One first needs to check whether $[[x,y],z]$ vanishes for all $x,y,z\in\{\alpha_i,\beta_j\mid i,j\in [1,n_t]\}$. Next one checks if the commutators $[\gamma_i,\tau]$ vanish. Finally one checks for all $c_{2^{i}}$ whether $\gamma_{2^{i}}^{d(i)}$ vanishes. For this, if $j$ is congruent to exactly $1$ $2$-power $2^{i}$, then one checks whether $\gamma_{2^{i}}^{d(i)}$ vanishes and if $j$ is congruent to at least $2$ such powers, then one checks whether $\gamma_{j}$ itself vanishes.
		For this one needs to check a number of things polynomial in $n_t$, which is bounded above by the order of $Q$.
	\end{proof}
	\begin{corollary}\label{prop:QuoteintEnumeration}
		If $d$ is non-decreasing prime-valued and satisfies \ref{cond:fastcompare} and \ref{cond:fastcompute}, then $G_d$ has quotient enumeration.
	\end{corollary}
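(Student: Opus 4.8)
The plan is to reduce the statement to the decidability of the homomorphism‑extension test provided by \cref{prop:checkIfQuotient}. Recall that since $G_d$ is generated by the three elements $t$, $a_0$, $b_0$, every finite quotient $\pi\colon G_d\twoheadrightarrow Q$ is completely determined by the triple $(\pi(t),\pi(a_0),\pi(b_0))\in Q^3$; conversely a triple $(\tau,\alpha,\beta)\in Q^3$ arises this way precisely when the assignment $t\mapsto\tau$, $a_0\mapsto\alpha$, $b_0\mapsto\beta$ extends to a group morphism $G_d\to Q$, after which one restricts the codomain to $\langle\tau,\alpha,\beta\rangle\leq Q$ to obtain a surjection. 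Thus a quotient‑enumeration algorithm amounts to listing exactly those data $(Q,\tau,\alpha,\beta)$ for which such an extension exists.

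Concretely, I would have the algorithm iterate over all pairs consisting of a finite group $Q$ (presented, say, by a multiplication table on $\{1,\dots,k\}$, which is routinely checkable and enumerable) and a triple $(\tau,\alpha,\beta)\in Q^3$; there are only countably many such data and they are trivially enumerable. For each such datum, run the procedure of \cref{prop:checkIfQuotient} to decide whether $t\mapsto\tau$, $a_0\mapsto\alpha$, $b_0\mapsto\beta$ extends to a morphism $G_d\to Q$. Whenever the answer is positive, output the quotient $G_d\twoheadrightarrow\langle\tau,\alpha,\beta\rangle$ together with the images of the generators. Since the test is a genuine decision procedure, each iteration terminates, so the overall process produces an infinite list; by the previous paragraph this list contains every finite quotient of $G_d$ and nothing else, which is exactly what quotient enumeration requires.

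The only nontrivial input is the effectiveness of the extension test, and this is precisely the content of \cref{prop:checkIfQuotient}; its polynomial‑time bound is far more than is needed here, as mere decidability suffices for enumeration. I do not expect a genuine obstacle. The one place where naivety fails is that $G_d$ has infinitely many defining relators, so one cannot simply verify that all relators die in $Q$; but that difficulty was already dispatched in \cref{prop:checkIfQuotient}, where one uses that it is enough to check the relators whose indices are bounded by the order of $\tau$ together with the finitely many relevant central power relations $c_{2^i}^{d(i)}$, invoking condition \ref{cond:fastcompute} to compute the needed values $d(i)$. Hence the corollary follows immediately.
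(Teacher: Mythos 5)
Your proposal is correct and is essentially the paper's own argument: the paper's proof simply enumerates all finite groups and invokes \cref{prop:checkIfQuotient} to test each candidate, which is exactly your procedure, merely stated more tersely. Your additional details (iterating over triples $(\tau,\alpha,\beta)$, restricting to $\langle\tau,\alpha,\beta\rangle$ to get surjectivity) are a faithful expansion of the same idea, not a different route.
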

	\begin{proof}
		Enumerate all finite groups, by \cref{prop:checkIfQuotient} we may check whether each of these groups can be a quotient of $G_d$.
	\end{proof}
	\begin{proof}[proof of \cref{prop:main}]
	Consider for the class of groups $G_d$ where $d:\NN\rightarrow\NN$ is a non-decreasing prime valued function satisfying \ref{cond:fastcompare} and \ref{cond:fastcompute}. These groups are conjugacy separable by \cref{prop:conjsep} and have quotient enumeration by \cref{prop:QuoteintEnumeration}. By \cref{prop:conjprop} they have conjugacy problem that is solvable in polynomial time and by combining  \cref{prop:ConjSepEffective} with \cref{prop:LargeNiceExists} we obtain that their conjugacy separability can be arbitrarily large.
	\end{proof}
	\bibliographystyle{plain}
	\bibliography{bibliography}
\end{document}